\newtheorem{theorem}{Theorem}[section]
\newtheorem{corollary}[theorem]{Corollary}
\newtheorem{proposition}[theorem]{Proposition}
\theoremstyle{definition}
\newtheorem{lemma}[theorem]{Lemma}
\def\ep{{\epsilon}}
\def\al{{\alpha}}
\def\R{\mathbb R}
\def\N{\mathbb N}
\def\Z{\mathbb Z}
\def\Sn{\mathbb S}
\def\Sp{\mathcal S}
\def\ra{\rightarrow}
\def\bey{\begin{eqnarray*}}
\def\eey{\end{eqnarray*}}
\def\D{{\mathscr D}}
\def\M{{\mathcal M}}
\def\XXint#1#2#3{{\setbox0=\hbox{$#1{#2#3}{\int}$}
     \vcenter{\hbox{$#2#3$}}\kern-.5\wd0}}
\subjclass[2000]{46E35, 26A33, 42B20}
\keywords{Sobolev spaces, fractional integrals, Riesz potentials, rough singular integrals, hypersingular integrals, Poincar\'e inequalities.}
\thanks{The second author is supported by Simons Collaboration Grant for Mathematicians, 160427.  The third author is supported by grant  PID2023-146646NB-I00, Spanish Government; by the Basque Government through grant IT1615-22 and the BERC 2014-2017 program, and by BCAM Severo Ochoa accreditation SEV-2013-0323, Spanish Government.}
\begin{document}

\title[]{New pointwise bounds by Riesz potential type operators}
\author{Cong Hoang}

\address{Cong Hoang \\
 Department of Mathematics \\
Florida Agricultural and Mechanical University \\
 Tallahassee, FL 32307, USA}

\email{cong.hoang@famu.edu}

\author{Kabe Moen}

\address{Kabe Moen \\
 Department of Mathematics \\
 University of Alabama \\
 Tuscaloosa, AL 35487, USA}

\email{kabe.moen@ua.edu}

\author{Carlos P\'erez Moreno}

\address{Carlos P\'erez Moreno\\
 Department of Mathematics \\
 University of the Basque Country \\
 Ikerbasque and BCAM \\
 Bilbao, Spain}

\email{cperez@bcamath.org}


\maketitle

 \begin{abstract} We investigate new pointwise bounds for a class of rough integral operators, $T_{\Omega,\al}$, for a parameter $0<\al<n$ that includes classical rough singular integrals of Calder\'on and Zygmund, rough hypersingular integrals, and rough fractional integral operators.  We prove that the rough integral operators are bounded by a sparse potential operator that depends on the size of the symbol $\Omega$.  As a result of our pointwise inequalities, we obtain several new Sobolev mappings of the form $T_{\Omega,\al}:\dot W^{1,p}\ra L^q$.
 
 \end{abstract}

\section{Introduction}
Let $I_\al$ be the Riesz potential of order $0<\al<n$,
$$I_\al f(x)={c_{n,\al}}\int_{\R^n}\frac{f(y)}{|x-y|^{n-\al}}\,dy$$
where the constant $c_{n,\al}$ is chosen so that
$$\widehat{I_\al f}(\xi)=|\xi|^{-\al}\hat{f}(\xi).$$
The standard mapping properties of the Riesz potential are
\begin{equation}\label{Ialbds} \|I_\al f\|_{L^q(\R^n)}\leq C\|f\|_{L^p(\R^n)},\end{equation}
when $1<p<\frac{n}{\al}$ and $\frac1p-\frac1q=\frac{\al}{n}$.  At the endpoint $p=1$ the weak-type inequality
\begin{equation}\label{weakIalbds} \|I_\al f\|_{L^{\frac{n}{n-\al},\infty}(\R^n)}\leq C\|f\|_{L^1(\R^n)}\end{equation}
holds.
One of the main applications of the Riesz potential is the Gagliardo-Nirenberg-Sobolev inequality, 
\begin{equation}\label{GNSineq} \|f\|_{L^{p^*}(\R^n)}\leq C\|\nabla f\|_{L^p(\R^n)} \quad f\in C_c^\infty(\R^n),\end{equation}
for $1\leq p<n$ and $p^*=\frac{np}{n-p}$.  If we define the homogeneous Sobolev space $\dot W^{1,p}(\R^n)$ to be the closure of smooth functions with compact support with respect to the seminorm $\|\nabla f\|_{L^p(\R^n)}$, then we see that inequality \eqref{GNSineq} gives rise to the Sobolev embedding $$\dot W^{1,p}(\R^n)\hookrightarrow L^{p^*}(\R^n).$$
When $1<p<n$, inequality \eqref{GNSineq} follows from mapping properties of the Riesz potential $I_1$ and the pointwise inequality
\begin{equation}\label{ptwisebdd} |f(x)|\leq c_nI_1(|\nabla f|)(x), \qquad f\in C_c^\infty(\R^n). \end{equation}
Inequality \eqref{GNSineq} also holds when $p=1$ and $1^*=\frac{n}{n-1}=n'$ despite the fact that $I_1$ only satisfies the weak-type boundedness $I_1:L^1(\R^n)\ra L^{n',\infty}(\R^n)$. 

\subsection{New pointwise bounds}
Recently, in \cite{HMP}, the authors investigated an extension of inequality \eqref{ptwisebdd} to operators. More precisely, the inequality
\begin{equation}\label{Tbdd} |Tf(x)|\leq c_TI_1(|\nabla f|)(x), \quad f\in C_c^\infty(\R^n),\end{equation}
holds for several prominent operators in harmonic analysis.  We now survey these results and their consequences. 
\begin{itemize}[leftmargin=*]
\item The Hardy-Littlewood maximal operator is given by
$$Mf(x)=\sup_{Q\ni x} \fint_Q |f| $$
where the supremum is over all cubes $Q$. The operator and its iterates $M^k=M\circ\cdots\circ M$, satisfy 
\begin{equation}\label{Mkptwise} M^kf(x)\leq c_kI_1(|\nabla f|)(x), \quad k=1,2,\ldots\end{equation}
Inequality \eqref{Mkptwise} follows from the fact that $I_1(|\nabla f|)$ is an $A_1$ weight. The corresponding inequality with the smaller operator $M_1(|\nabla f|)$ on the right-hand side does not hold, even when $k=0$, despite the fact $M_1(|\nabla f|)$ is also an $A_1$ weight (see Section \ref{maxbounds}).
\item The $L^r$-maximal operators are given by
$$M_{L^r}f(x)=\sup_{Q\ni x}\left(\fint_Q |f|^r\right)^{\frac1r}=[M(|f|^r)(x)]^{\frac1r}.$$
These operators satisfy 
$$M^kf(x)\leq c_{k,r}M_{L^r}f(x),$$
for all $k=1,2,\ldots,$ and $r>1$.  If $1\leq r\leq n'$ then 
\begin{equation}\label{Lrmaxptwise} M_{L^r}f(x)\leq cI_1(|\nabla f|)(x),\end{equation}
which improves \eqref{Mkptwise}.
The value $r=n'$ is a critical index in the sense that $M_{L^{n'+\ep}}f$ is not bounded by $I_1(|\nabla f|)$ when $\ep>0$.  
\item  We can improve \eqref{Mkptwise} and \eqref{Lrmaxptwise} by considering maximal functions associated with Lorentz spaces.  Let $L^{p,q}$ be the Lorentz space normed by
\begin{equation*}\|f\|_{L^{p,q}(\R^n)} =\left(p\int_0^\infty t^q|\{x:|f(x)|>t\}|^{\frac{q}{p}}\frac{dt}{t}\right)^{\frac1q}\end{equation*}
when $1\leq q<\infty$ and
$$\|f\|_{L^{p,\infty}(\R^n)}=\sup_{t>0}t|\{x:|f(x)|>t\}|^{\frac1p}.$$
Define the normalized Lorentz average on a cube by
\begin{equation}\label{lorentzdefn}\|f\|_{L^{p,q}(Q)}=\frac{1}{|Q|^{\frac1p}}\|f\mathbf 1_Q\|_{L^{p,q}(\R^n)}\end{equation}
and the Lorentz maximal function
$$M_{L^{p,q}}f(x)=\sup_{Q\ni x}\|f\|_{L^{p,q}(Q)}.$$
The following pointwise bound holds
\begin{equation}\label{Lorentzmaxptwise} M_{L^{n',1}}f(x)\leq cI_1(|\nabla f|)(x).\end{equation}
Inequality \eqref{Lorentzmaxptwise} further improves \eqref{Lrmaxptwise} because
$$M_{L^{n'}}f(x)\leq M_{L^{n',1}}f(x)$$
by the well-known containments of Lorentz spaces $L^{n',1}\subseteq L^{n'}$.

\item Given $\Omega \in L^1(\Sn^{n-1})$ consider the rough maximal operators
$$M_\Omega f(x)=\sup_{t>0}\fint_{|y|<t}|\Omega(y')f(x-y)|\,dy$$
where $y'=y/|y|$. When $\Omega \in L^{n,\infty}(\Sn^{n-1})$ we have
$$M_\Omega f(x)\leq c\|\Omega\|_{L^{n,\infty}(\Sn^{n-1})}M_{L^{n',1}}f(x).$$
and so by inequality \eqref{Lorentzmaxptwise} we have
$$M_\Omega f(x)\leq c\|\Omega\|_{L^{n,\infty}(\Sn^{n-1})}I_1(|\nabla f|)(x).$$
\item The spherical maximal operator is defined to be
$$\mathcal Sf(x)=\sup_{r>0}\fint_{\partial B(x,r)}|f(y)|\,d\mathcal H^{n-1}(y)$$
where $\mathcal H^{n-1}$ is surface measure on $\partial B(x,r)$.  In \cite{HMP} it is shown that
\begin{equation}\label{spherical} \Sp f(x)\leq I_1(|\nabla f|)(x),\qquad f\in C_c^\infty(\R^n).\end{equation}
This inequality is of interest because it is related to the Sobolev mapping properties of $\Sp$. More general maximal operators of the form
$$\M_\mu(x)=\sup_{r>0}\int_{\R^n}|f(x+ry)|\,d\mu$$
can also be considered.  Here $\mu$ is a spherical-like measure, i.e., $\mathsf{supp}\,\mu\subseteq B(0,R)$ for some $R>0$ and
$$\mu(B(x,r))\leq Cr^{n-1} \quad x\in \R^n, r>0.$$
In fact, Haj\l{}asz and Liu \cite{HL2} show that 
$$\M_\mu f(x)\leq cI_1(|\nabla f|)(x),\quad f\in C_c^\infty(\R^n),$$
which generalizes \eqref{spherical} since $\Sp=\mathcal M_\mu$ when $\mu$ is the normalized Hausdorff measure on the unit sphere.

\item The main results from \cite{HMP} state that the pointwise bound \eqref{Tbdd} holds for rough singular integral operators 
\begin{equation}\label{roughSI} T_\Omega f(x)=\mathsf{p.v.}\int_{\R^n}\frac{\Omega(y')}{|y|^n}f(x-y)\,dy\end{equation}
where $y'=y/|y|$ and $\Omega\in L^1(\Sn^{n-1})$ satisfies $\int_{\Sn^{n-1}}\Omega=0$. It was shown that if $\Omega\in L^{n,\infty}(\Sn^{n-1})$ then 
$$|T_\Omega f(x)|\leq c\|\Omega\|_{L^{n,\infty}(\Sn^{n-1})} I_1(|\nabla f|)(x).$$
This pointwise bound was extended to more general potential operators on the right-hand side in \cite{HMP2}.

\end{itemize}

\subsection{Consequences}

By the mapping properties of $I_1$, any operator that satisfies the pointwise bound \eqref{Tbdd} will satisfy several Sobolev mapping properties.  First, the operator will be bounded from $\dot W^{1,p}(\R^n)$ to $L^{p^*}(\R^n)$ when $1<p<n$, namely,
$$\|Tf\|_{L^{p^*}(\R^n)}\leq C\|\nabla f\|_{L^p(\R^n)}.$$
When $p=1$, $I_1$ only satisfies a weak type bound and the corresponding mapping is a $\dot W^{1,1}(\R^n)\ra L^{n',\infty}(\R^n)$ inequality
\begin{equation}\label{endptLeb}\|Tf\|_{L^{n',\infty}(\R^n)}\leq C\|\nabla f\|_{L^1(\R^n)}.\end{equation}
The mapping $T:\dot W^{1,p}(\R^n)\ra L^{p^*}(\R^n)$ is less difficult to prove than $T:W^{1,p}(\R^n)\ra W^{1,p}(\R^n)$, but is still of interest, especially at the endpoint. Indeed, the endpoint inequality \eqref{endptLeb} is particularly significant for the operators $M_{L^{n',1}}$ and $\mathcal S$ which are \emph{not} bounded at the weak endpoint.  When $p=n$, we have $I_1:L^n(\R^n) \ra BMO$. The space $BMO$ does not preserve pointwise inequalities, however, $I_1$ does map into a local exponential space.  Namely, for any cube $Q$ containing the support of $f\in C_c^\infty(\R^n)$
$$\|Tf\|_{\exp L^{n'}(Q)}\leq C\|I_1(|\nabla f|)\|_{\exp L^{n'}(Q)}\leq C\left(\int_Q|\nabla f|^n\right)^{\frac1n}. $$

Muckenhoupt and Wheeden \cite{MW} showed that $I_1$ satisfies the weighted inequality
$$\|wI_1g\|_{L^{p^*}(\R^n)}\leq C\|wg\|_{L^p(\R^n)}$$
for $1<p<n$ precisely when $w\in A_{p,p^*}:$
$$[w]_{A_{p,p^*}}=\sup_Q\left(\fint_Q w^{p^*}\right)\left(\fint_Q w^{-p'}\right)^{\frac{p^*}{p'}}<\infty.$$
 When $p=1$, they showed weak type inequality 
$$\|I_1g\|_{L^{n',\infty}(w^{n'})}\leq C\|g\|_{L^1(w)}$$
holds if and only if
$$[w]_{A_{1,n'}}=\left\|\frac{M(w^{n'})}{w^{n'}}\right\|_{L^\infty(\R^n)}<\infty.$$
The sharp quantitative bounds were later found in \cite{LMPT} to be 
$$\|wI_1g\|_{L^{p^*}(\R^n)}\leq C[w]_{A_{p,p^*}}^{\frac{1}{n'}\max\{1,\frac{p'}{p^*}\}}\|wg\|_{L^p(\R^n)}$$
and
$$\|I_1g\|_{L^{n',\infty}(w^{n'})}\leq C[w]_{A_{1,n'}}^{\frac{1}{n'}}\|g\|_{L^1(w)}.$$
It follows, that any operator which satisfies $|Tf|\leq cI_1(|\nabla f|)$, will automatically satisfy Sobolev type mappings
\begin{equation*} \|wTf\|_{L^{p^*}(\R^n)}\leq C[w]_{A_{p,p^*}}^{\frac{1}{n'}\max\{1,\frac{p'}{p^*}\}}\|w\nabla f\|_{L^p(\R^n)}\end{equation*}
and
\begin{equation*} \|Tf\|_{L^{n',\infty}(w^{n'})}\leq C[w]_{A_{1,n'}}^{\frac{1}{n'}}\|\nabla f\|_{L^1(w)}.\end{equation*}

Any operator satisfying \eqref{Tbdd} will also satisfy the two weight Sobolev inequality
$$\|Tf\|_{L^q(u)}\leq C\|\nabla f\|_{L^p(v)}$$
whenever $I_1:L^p(v)\ra L^q(u)$.  The two weight inequality for $I_\al$, that is, 
$$\|I_\al f\|_{L^q(u)}\leq C\|f\|_{L^p(v)}$$ 
has a long history and we refer readers to \cite{HMP} for a discussion of the conditions that imply it. 

Lastly, it was shown in \cite{HMP} that the pointwise bound \eqref{Tbdd} implies a certain self-improvement of the form
$$M_{L^r}(Tf)(x)\leq c_rI_1(|\nabla f|)(x), \quad 0\leq r<n'.$$
In this sense, we see that there is always ``room'' in the inequality $|Tf|\leq CI_1(|\nabla f|)$ for a bigger operator on the left side.

\subsection{The main operators}

In the present work, we investigate a more general class of operators and their pointwise bounds by potential operators applied to the gradient. In particular, we extend the results from \cite{HMP} twofold. First, we consider operators with a different singularity which can be more singular than the classical case.  Second, we address the case when $\Omega$ belongs to a class below the critical index $r=n$. 

Given $0<\al<n$ and $\Omega\in L^1(\Sn^{n-1})$ with mean zero, define the rough fractional singular integral operator
\begin{equation*}\label{roughoperator} T_{\Omega,\al}f(x)=\mathsf{p.v.}\int_{\R^n} \frac{\Omega(y')}{|y|^{n+1-\al}}f(x-y)\,dy.\end{equation*}
Notice that for $\lambda>0$ if we let $f_\lambda(x)=f(\lambda x)$, then we have the following homogeneity:
$$T_{\Omega,\al}(f_\lambda)(x)=\lambda^{1-\al}T_{\Omega,\al}f(\lambda x).$$
The operators $T_{\Omega,\al}$ behave differently as $\al$ varies. 
\begin{itemize}
\item When $\al=1$, the operator $T_{\Omega,1}=T_\Omega$ is a classical rough singular integral operator as defined in \eqref{roughSI}.
\item When $0<\al<1$ the operator $T_{\Omega,\al}$ has a greater singularity than the classical Calder\'on-Zygmund operators and is known in the literature as a hypersingular integral operator, in this case, a rough version. We refer readers to the work of Wheeden \cite{Wh1,Wh2} (see also \cite{CFY}) for more on hypersingular integrals.  In this case, operator $T_{\Omega,\al}$ is related to the nonlinear fractional differential operator
$$\mathcal D^{1-\al}f(x)=\int_{\R^n}\frac{|f(y)-f(x)|}{|x-y|^{n+1-\al}}\,dy, \qquad \al\in (0,1)$$
introduced in \cite{Sp}.  In fact, if $\Omega\in L^\infty(\Sn^{n-1})$ and satisfies $\int_{\Sn^{n-1}}\Omega~=~0$ then the equality
\begin{multline*}\int_{\ep<|y|<N}\frac{\Omega(y')}{|y|^{n+1-\al}}f(x-y)\,dy\\=\int_{\ep<|y|<N}\frac{\Omega(y')}{|y|^{n+1-\al}}(f(x-y)-f(x))\,dy\end{multline*}
leads to 
$$|T_{\Omega,\al}f(x)|\leq \|\Omega\|_{L^\infty(\Sn^{n-1})}\mathcal D^{1-\al}f(x).$$
\item When $1<\al<n$ the operator $T_{\Omega,\al}$ is a rough fractional integral operator.  Such operators were used by Haj\l{}asz and Liu \cite{HL} in the study of Sobolev mappings for the spherical maximal operator $\Sp$.
\end{itemize}

As motivation, consider the case when $\Omega\in L^\infty(\Sn^{n-1})$ and $1<\al<n$.  In this case, we have 
$$|T_{\Omega,\al}f(x)|\leq c\|\Omega\|_{L^\infty(\Sn^{n-1})} I_{\al-1}(|f|)(x)$$
where the assumption $1<\al<n$ guarantees that $I_{\al-1}$ is well-defined.  For smooth functions, by inequality \eqref{ptwisebdd} we obtain the pointwise bound:
$$|T_{\Omega,\al}f(x)|\leq c I_{\al-1}(|f|)(x)\leq c I_{\al-1}\big(I_1(|\nabla f|)\big)(x)=c I_\al(|\nabla f|)(x).$$
We have used the classical convolution identity for the Riesz potential operators
$$I_\gamma\circ I_\beta=I_{\gamma+\beta}, \qquad \gamma+\beta<n,$$ which can be easily seen on Fourier transform side.  The operator $I_\al(|\nabla f|)$ has the same homogeneity as $T_{\Omega,\al}$ since
$$I_\al(|\nabla f_\lambda|)(x)=\lambda I_\al(|(\nabla f)_\lambda|)(x)=\lambda^{1-\al}I_\al(|\nabla f|)(\lambda x).$$ 
Therefore, the operator $I_\al(|\nabla f|)$ serves a natural upper bound to consider for $T_{\Omega,\al}$. Our main results will be new pointwise bounds for the operator $T_{\Omega,\al}$ depending on the size of $\Omega\in L^r(\Sn^{n-1})$ for $1<r<\infty$ and for the full range $0<\al<n$. 

For classical rough singular integral operators $T_{\Omega}$, it is well documented that the size of $\Omega$ dictates the behavior of the associated rough operator. In particular, the assumption $\Omega\in L^1(\Sn^{n-1})$ is not enough to guarantee $L^p(\R^n)$ boundedness.  The weakest assumption that is sufficient for $L^p(\R^n)$ boundedness is $\Omega\in L\log L(\Sn^{n-1})$. In our setting, the size will break into three cases:
\begin{description}
\item[Critical space] When $r=n$ and $\Omega \in L^{n,\infty}(\Sn^{n-1})$ is the critical space that appeared in \cite{HMP}.  In this case, we extend our results to the operator $T_{\Omega,\al}$ and show it is bounded by $I_\al(|\nabla \cdot|)$.  As in the case $\alpha=1$, this implies several Sobolev mappings for $T_{\Omega,\al}$, whenever $I_\al$ satisfies Lebesgue spaces bounds. 
\item[Subcritical space] When $1<r<n$, and $\Omega\in L^r(\Sn^{n-1})$ we are no longer able to obtain pointwise bounds by the Riesz potential of the gradient.  Instead, we obtain bounds by suitable sparse operators.  We derive new weighted estimates from these pointwise bounds, which we discuss in later sections.
\item[Endpoint space] At the endpoint $r=1$, we prove bounds by a sparse operator when $\Omega\in L^1(\log L)^{\frac1{n'}}(\Sn^{n-1})$, but only in the hypersingular case $0<\al<1$.  These lead to a variety of new Sobolev-type bounds.  Moreover, this is a larger class than the usual logarithmic endpoint since
$$L\log L(\Sn^{n-1})\subsetneq L(\log L)^{\frac1{n'}}(\Sn^{n-1}).$$
\end{description}
We begin with the critical space where we have the following pointwise bound.
\begin{theorem} \label{poinwisecriticalT} Suppose, $0<\al<n$, $\Omega\in L^{n,\infty}(\Sn^{n-1})$, and has mean zero.  Then 
\begin{equation}\label{pointwisecritT}|T_{\Omega,\al}f(x)|\leq c\|\Omega\|_{L^{n,\infty}(\Sn^{n-1})}I_\al(|\nabla f|)(x),\qquad f\in C_c^\infty(\R^n).\end{equation}
\end{theorem}

Inequality \eqref{pointwisecritT} leads to several Sobolev mapping properties for $T_{\Omega,\alpha}$. In particular, one obtains  
$$
\|T_{\Omega,\alpha}f\|_{L^q(u)} \leq C \|\nabla f\|_{L^p(v)},
$$
whenever the Riesz potential satisfies $I_\alpha : L^p(v) \to L^q(u)$. Remarkably, a weak-type endpoint Sobolev inequality for measures completely characterizes when general operators admit a pointwise bound by $I_\alpha(|\nabla \cdot|)$.

\begin{theorem}\label{characterization} Suppose $0<\al<n$ and $T$ is an operator defined pointwise on $C_c^\infty(\R^n)$.  Then the following are equivalent:
\begin{enumerate}
\item The pointwise inequality 
$$|Tf(x)|\leq cI_\al(|\nabla f|)(x),$$
holds for all $x\in \R^n$ and $f\in C_c^\infty(\R^n)$.
\item The weak endpoint Sobolev inequality 
\begin{equation}\label{endpoint}\|Tf\|_{L^{\frac{n}{n-\al},\infty}(\mu)}\leq C\int_{\R^n}|\nabla f|(M\mu)^{1-\frac\al{n}}\end{equation}
holds for every locally finite Borel measure $\mu$ and $f\in C_c^\infty(\R^n)$.
\end{enumerate}
\end{theorem}

We note that inequality \eqref{endpoint} arises naturally in this context. Indeed, by letting $d\mu = w\,dx$ with $w \in A_1$ (i.e., $Mw \leq c w$), inequality \eqref{endpoint} yields  
\begin{equation}\label{weakendpt}
\|Tf\|_{L^{\frac{n}{n-\alpha},\infty}(w)} \leq C \int_{\mathbb{R}^n} |\nabla f|\, w^{1 - \frac{\alpha}{n}}.
\end{equation}
Through off-diagonal extrapolation and appropriate renormalization of the weights, inequality \eqref{weakendpt} in turn implies the weighted Sobolev inequality  
$$
\|w Tf\|_{L^q(\mathbb{R}^n)} \leq C \|w \nabla f\|_{L^p(\mathbb{R}^n)},
$$
for $1 < p < \frac{n}{\alpha}$ and $\frac{1}{q} = \frac{1}{p} - \frac{\alpha}{n}$, provided that $w \in A_{p,q}$ (see Section~\ref{weights}). Further details may be found in \cite{LMPT}.

To state our results for $1<r<n$ we need to introduce some common machinery concerning dyadic cubes and sparse families.  Recall a dyadic grid $\mathscr D$ is a collection of cubes in $\R^n$ such that every cube has sidelength $2^k$ for some $k\in \Z$, for each fixed $k\in \Z$ the cubes of length $2^k$ partition $\R^n$, and the entire collection $\mathscr D$ satisfies the ``nested or disjoint'' property that $Q\cap P\in \{\varnothing,P,Q\}$ for all $Q,P\in \mathscr D$.  We say a subfamily of dyadic cubes $\mathscr S\subseteq \D$ is sparse if for each $Q\in \mathscr S$ there exists $E_Q\subseteq Q$ such that $|Q|\leq 2|E_Q|$ and the family $\{E_Q:Q\in\mathscr S\}$ is pairwise disjoint.

Given, $0<\al<n$, a sparse family $\mathscr S$, we define the sparse fractional integral operator
$$I_{\al}^\mathscr S f(x)=\sum_{Q\in \mathscr S}\ell(Q)^\al \left(\,\fint_Q f\right)\mathbf 1_Q(x).$$
In \cite{CM2} it is shown that $I_\al$ is bounded by finitely many sparse operators.  If $f\in L_c^\infty(\R^n)$ and $f\geq 0$, then there exists sparse families of cubes $\mathscr S_1,\ldots,\mathscr S_N$ such that
\begin{equation}\label{sparsebdd} I_\al f(x)\leq c\sum_{k=1}^N I_\al^{\mathscr S_k}f(x).\end{equation}
The sparse families depend on $f$, but the implicit constants in the inequality do not depend on $f$. The sparse operators are often simpler to work with and several norm inequalities can be gleaned from them.  

If we combine Theorem \ref{poinwisecriticalT} with inequality \eqref{sparsebdd} we obtain the following corollary, which provides a preview of our forthcoming results.

\begin{corollary} Suppose, $0<\al<n$, $\Omega\in L^{n,\infty}(\Sn^{n-1})$, and has mean zero.  Then given $f\in  C_c^\infty(\R^n)$, there exists finitely many sparse families of cubes $\mathscr S_1,\ldots,\mathscr S_N$ such that
$$|T_{\Omega,\al}f(x)|\leq c\|\Omega\|_{L^{n,\infty}(\Sn^{n-1})}\sum_{k=1}^NI^{\mathscr S_k}_\al(|\nabla f|)(x).$$
\end{corollary}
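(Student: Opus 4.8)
The plan is to simply chain together the two results already in hand, so this will be a short argument. First I would invoke Theorem~\ref{poinwisecriticalT}: since $0<\al<n$, $\Omega\in L^{n,\infty}(\Sn^{n-1})$ has mean zero, and $f\in C_c^\infty(\R^n)$, we have the pointwise domination
$$|T_{\Omega,\al}f(x)|\leq c\|\Omega\|_{L^{n,\infty}(\Sn^{n-1})}\,I_\al(|\nabla f|)(x),\qquad x\in\R^n.$$
The only point that needs checking before we can feed the right-hand side into \eqref{sparsebdd} is that $g:=|\nabla f|$ meets the hypotheses there, namely $g\in L_c^\infty(\R^n)$ and $g\geq 0$. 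Both are immediate: $f$ is smooth with compact support, so $\nabla f$ is bounded with $\mathsf{supp}\,\nabla f\subseteq\mathsf{supp}\,f$, whence $|\nabla f|\in L_c^\infty(\R^n)$, and trivially $|\nabla f|\geq 0$. (In particular $I_\al(|\nabla f|)(x)$ is finite for every $x$, because $|x-y|^{\al-n}$ is locally integrable near $x$ and $|\nabla f|$ has compact support.)

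Next I would apply the sparse domination \eqref{sparsebdd} from \cite{CM2} to the function $g=|\nabla f|$: there exist finitely many sparse families $\mathscr S_1,\ldots,\mathscr S_N$, depending on $f$ but with the implicit constant independent of $f$, such that
$$I_\al(|\nabla f|)(x)\leq c\sum_{k=1}^N I_\al^{\mathscr S_k}(|\nabla f|)(x),\qquad x\in\R^n.$$
Combining the two displays and absorbing the product of the two absolute constants into a single $c$ yields
$$|T_{\Omega,\al}f(x)|\leq c\|\Omega\|_{L^{n,\infty}(\Sn^{n-1})}\sum_{k=1}^N I_\al^{\mathscr S_k}(|\nabla f|)(x),$$
which is the assertion of the corollary.

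I do not anticipate any real obstacle: the entire substance lies in Theorem~\ref{poinwisecriticalT} and in the sparse domination of $I_\al$ recalled in \eqref{sparsebdd}, and the corollary is just their composition. The one genuinely substantive structural point — which is exactly what \eqref{sparsebdd} delivers — is that the sparse families can be taken to depend only on $|\nabla f|$ while the constants remain uniform, so that the resulting bound is a legitimate pointwise inequality with $f$-independent constants.
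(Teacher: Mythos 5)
Your proposal is correct and follows exactly the route the paper indicates: chain Theorem~\ref{poinwisecriticalT} with the sparse domination \eqref{sparsebdd}, after noting that $|\nabla f|\in L^\infty_c(\R^n)$ and $|\nabla f|\geq 0$ so \eqref{sparsebdd} applies. The paper gives no separate proof beyond the sentence ``If we combine Theorem \ref{poinwisecriticalT} with inequality \eqref{sparsebdd} we obtain the following corollary,'' so you have supplied precisely the intended argument, with the hypothesis check made explicit.
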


  In the subcritical setting for $T_{\Omega,\al}$ we do not obtain bounds by the sparse operator $I^\mathscr S_\al(|\nabla f|)$, instead, we obtain a bigger operator on the right-hand side. Given $0<\al<n$ and an exponent $1\leq s<\frac{n}{\al}$ we define the $L^s$ sparse fractional operator
$$
I_{\al,L^s}^\mathscr S f(x)=\sum_{Q\in \mathscr S}\ell(Q)^\al \left(\,\fint_Q |f|^s\right)^{\frac1s}\mathbf 1_Q(x).
$$
When $s=1$ we simply write $I_{\al,L^1}^\mathscr S=I^\mathscr S_\al.$ The restriction $s<\frac{n}{\al}$ is necessary, because the operator $I_{\al,L^s}^\mathscr S$ may not be well-defined when $s\geq \frac{n}{\al}$. Indeed, consider the sparse family $\mathscr S=\{[0,2^k)^n:k\in \N\}.$  If $s\geq \frac{n}{\al}$ then for $x\in [0,1]^n$ we have
$$I^\mathscr S_{\al,L^s}(\mathbf 1_{[0,1]^n})(x)=\sum_{Q\in \mathscr S}\ell(Q)^\al \Big(\frac{1}{|Q|}\Big)^{\frac1s}=\sum_{k=1}^\infty 2^{k(\al-\frac{n}{s})}=\infty.$$

The operators $I_{\al,L^s}^\mathscr S$ govern the rough operators $T_{\Omega,\al}$ in the subcritical case.  Our scope extends beyond the class $L^{r}(\Sn^{n-1})$ -- specifically, we are able to consider the more refined class where $\Omega$ belongs to the Lorentz space $L^{r,r^*}(\Sn^{n-1})$.  This space is normed by
$$\|\Omega\|_{L^{r,r^*}(\Sn^{n-1})}=\left(r\int_0^\infty t^{r^*}\sigma\big(\{y'\in \Sn^{n-1}:|\Omega(y')|>t\})^{\frac{r^*}{r}}\frac{dt}{t}\right)^{\frac1{r^*}},$$
where $r^*=\frac{nr}{n-r}$.

\begin{theorem} \label{rlessnptwisebd} Suppose $1<r<n$, $0<\al<1+\frac{n}{r'}$, and $\Omega \in L^{r,r^*}(\Sn^{n-1})$ has mean zero. Then there exists finitely many sparse families $\mathscr S_k \subseteq \D_k$, $k=1,\ldots, N$ such that
$$|T_{\Omega,\al}f(x)|\leq c\|\Omega\|_{L^{r,r^*}(\Sn^{n-1})}\sum_{k=1}^N I_{\al,L^s}^{\mathscr S_k}(|\nabla f|)(x)$$
where $\frac1s=\frac{1}{n}+\frac1{r'}$.
\end{theorem}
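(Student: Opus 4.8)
The plan is to run the annular decomposition behind Theorem~\ref{poinwisecriticalT}, the one genuinely new point being a careful bookkeeping of Lorentz exponents so that the assumption $\Omega\in L^{r,r^*}(\Sn^{n-1})$ is used at full strength. Write $\R^n\setminus\{0\}=\bigcup_{j\in\Z}A_j$ with $A_j=\{y:2^j\le|y|<2^{j+1}\}$, and decompose
\[
T_{\Omega,\al}f(x)=\sum_{j\in\Z}K_j*f(x),\qquad K_j*f(x)=\int_{A_j}\frac{\Omega(y')}{|y|^{n+1-\al}}\,f(x-y)\,dy.
\]
For $f\in C_c^\infty(\R^n)$ this is legitimate: only finitely many $j$ contribute as $j\to+\infty$ since $f$ has compact support, and the tail $j\to-\infty$ converges because $\int_{\Sn^{n-1}}\Omega=0$ allows one to replace $f(x-y)$ by $f(x-y)-f(x)$, after which $|f(x-y)-f(x)|\le\|\nabla f\|_\infty|y|$ yields $|K_j*f(x)|\lesssim\|\nabla f\|_\infty\|\Omega\|_{L^1(\Sn^{n-1})}2^{j\al}$. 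The same cancellation gives $\int_{A_j}\Omega(y')|y|^{\al-n-1}\,dy=0$, so if $Q_j=Q_j(x)$ denotes the cube centered at $x$ with $\ell(Q_j)=2^{j+3}$ (chosen so that $x-A_j\subseteq Q_j$), we may subtract $f_{Q_j}=\fint_{Q_j}f$ and write $K_j*f(x)=\int_{A_j}\Omega(y')|y|^{\al-n-1}(f(x-y)-f_{Q_j})\,dy$.

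The core of the argument is the per-scale bound
\[
|K_j*f(x)|\le c\,\|\Omega\|_{L^{r,r^*}(\Sn^{n-1})}\,\ell(Q_j)^\al\Big(\fint_{Q_j}|\nabla f|^s\Big)^{1/s},\qquad \frac1s=\frac1n+\frac1{r'}.
\]
I would prove it as follows. On $A_j$ one has $|y|^{\al-n-1}\le 2^{-j(n+1-\al)}$ (note $n+1-\al>0$). Next apply the Lorentz Hölder inequality on $\R^n$ to the functions $|\Omega(y')|\mathbf 1_{A_j}$ and $(f(x-y)-f_{Q_j})\mathbf 1_{A_j}$, with Lorentz exponents $(r,r^*)$ and $(r',(r^*)')$ respectively. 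A polar-coordinate computation with distribution functions shows $\big\|\,|\Omega(y')|\mathbf 1_{A_j}\,\big\|_{L^{r,r^*}(\R^n)}\approx 2^{jn/r}\|\Omega\|_{L^{r,r^*}(\Sn^{n-1})}$. For the second factor, since $x-A_j\subseteq Q_j$ the pointwise Poincaré inequality gives $|f(x-y)-f_{Q_j}|\le c\,I_1(|\nabla f|\mathbf 1_{Q_j})(x-y)$, where $I_1g(z)=\int g(w)|z-w|^{1-n}\,dw$. Now comes the decisive arithmetic: with $\frac1s=\frac1n+\frac1{r'}$ one checks $s^*:=(\frac1s-\frac1n)^{-1}=r'$ and $(r^*)'=s$, and $1<s<n$ precisely because $1<r<n$. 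Hence the target space for the second factor is exactly $L^{s^*,s}(\R^n)$, so the sharp Lorentz-improved Sobolev embedding $\|I_1g\|_{L^{s^*,s}(\R^n)}\le c\|g\|_{L^s(\R^n)}$ applies and bounds it by $\big\|\,|\nabla f|\mathbf 1_{Q_j}\,\big\|_{L^s}=|Q_j|^{1/s}\big(\fint_{Q_j}|\nabla f|^s\big)^{1/s}$. Collecting the powers of $2^j$ (recall $\ell(Q_j)\approx 2^j$, $|Q_j|\approx 2^{jn}$), the total exponent is $-(n+1-\al)+\frac nr+\frac ns=-(n+1-\al)+(1+n)=\al$, which is exactly the asserted bound. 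Replacing ordinary Hölder on the sphere and the classical $(L^s,L^{r'})$ Sobolev--Poincaré inequality by their Lorentz refinements is the only change needed to upgrade the hypothesis from $\Omega\in L^r(\Sn^{n-1})$ to $\Omega\in L^{r,r^*}(\Sn^{n-1})$.

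Summing in $j$ gives $|T_{\Omega,\al}f(x)|\le c\,\|\Omega\|_{L^{r,r^*}(\Sn^{n-1})}\sum_{j\in\Z}\ell(Q_j(x))^\al(\fint_{Q_j(x)}|\nabla f|^s)^{1/s}$. For $f\in C_c^\infty(\R^n)$ the series converges: the terms with $j\to-\infty$ are $\lesssim 2^{j\al}\|\nabla f\|_\infty$, and those with $j\to+\infty$ are $\lesssim 2^{j(\al-n/s)}\|\nabla f\|_{L^s}$, which is summable precisely because $\al<\frac ns=1+\frac n{r'}$ --- this is exactly where the hypothesis $0<\al<1+\frac n{r'}$ is used. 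It remains to dominate this sum of $L^s$-averages over centered cubes by finitely many sparse operators $I_{\al,L^s}^{\mathscr S_k}(|\nabla f|)$, $\mathscr S_k\subseteq\D_k$. I would do this exactly as in the proof of \eqref{sparsebdd} in \cite{CM2}: the ``$N\le 3^n$ shifted dyadic grids'' trick replaces each $Q_j(x)$ by a cube $R_j(x)\in\D_k$ with $Q_j(x)\subseteq R_j(x)$ and $\ell(R_j(x))\le 3\ell(Q_j(x))$; then, within each grid, a stopping-time (principal cube) construction applied to the averages $\fint_R|\nabla f|^s$ extracts a sparse subfamily $\mathscr S_k$, and summing the geometric contributions of the nested cubes sharing a common principal ancestor gives the claim. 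The passage from the $L^1$-averages of \cite{CM2} to $L^s$-averages is harmless, since the stopping time is simply run on $\fint_R|\nabla f|^s$ instead of $\fint_R|\nabla f|$.

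I expect the main obstacle to be the per-scale step, and within it the exponent bookkeeping that makes $L^{r,r^*}$ the correct class: the smaller second index $(r^*)'<r'$ costs exactly what the Lorentz-improved embedding $I_1:L^s\to L^{s^*,s}$ supplies, and the whole estimate hinges on the identities $s^*=r'$ and $(r^*)'=s$. A secondary point requiring care is the degeneration at the critical index: as $r\to n$ one has $s\to 1$ and the improved embedding fails at $L^1$, which is precisely why the endpoint $r=n$ is treated separately in Theorem~\ref{poinwisecriticalT}; throughout $1<r<n$ one has $1<s<n$ and every inequality invoked above holds with a finite constant.
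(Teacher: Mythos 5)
Your proof is correct and follows essentially the same architecture as the paper's: dyadic annular decomposition, cancellation to subtract a local average, Lorentz--H\"older pairing $L^{r,r^*}$ against $L^{r',(r^*)'}=L^{s^*,s}$ with the identities $s^*=r'$, $(r^*)'=s$, a Lorentz-improved Sobolev--Poincar\'e estimate for the oscillation factor, passage to shifted dyadic grids, and a stopping-time sparse domination (the paper's Lemma \ref{Lsdyadicfracbdds}). The one genuinely different step is your treatment of the oscillation term: the paper invokes the local Lorentz-improved Poincar\'e--Sobolev inequality \eqref{Lorpoincare} of O'Neill--Peetre directly on the ball $B(x,2^k)$, whereas you rebuild it from scratch using the pointwise Poincar\'e inequality $|f(z)-f_{Q_j}|\lesssim I_1(|\nabla f|\mathbf 1_{Q_j})(z)$ followed by the global Lorentz-improved embedding $I_1\colon L^s(\R^n)\to L^{s^*,s}(\R^n)$. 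These are logically interchangeable---indeed the local inequality is standardly derived exactly along your lines---but your version makes it transparent where the second Lorentz index improves and where the restriction $1<s<n$ (hence $1<r<n$) enters, at the price of two invocations where the paper uses a single packaged lemma. The other variations (subtracting $f_{Q_j}$ rather than $f_{B(x,2^k)}$, working on annuli $A_j$ rather than enlarging to full balls, citing $3^n$ rather than the paper's $2^n$ shifted grids) are cosmetic. Your remark on where $\al<1+\frac{n}{r'}$ is used matches the paper's observation that this is exactly the condition $s<\frac{n}{\al}$ required for $I^{\mathscr S}_{\al,L^s}$ to be well defined.
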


Since $r^*>r$ we have 
$$L^r(\Sn^{n-1})=L^{r,r}(\Sn^{n-1})\subseteq L^{r,r^*}(\Sn^{n-1})$$
and hence the statement of the theorems holds under the less general assumption $\Omega\in L^r(\Sn^{n-1})$.  If $r$ is a given exponent satisfying $1<r<n$, then the condition $s=\frac{r'n}{n+r'}<\frac{n}{\al}$ places a restriction on $\al$, namely $\al<1+\frac{n}{r'}$.  

We will derive the weighted estimates for the operators $I_{\al,L^s}^\mathscr S$ in Section \ref{weights}.  These weighted estimates will imply new Sobolev type bounds for $T_{\Omega,\al}$.  Before moving on, let us make a few quick observations.  The operator, $I_{\al,L^s}^\mathscr S$ is related to operator
$$f\mapsto \big[I_{\al s}(|f|^s)\big]^{\frac1s}.$$
The sparse bounds from inequality \eqref{sparsebdd} imply the pointwise relationship 
\begin{equation}\label{integralsparsebd} \big[I_{\al s}(|f|^s)\big]^{\frac1s}\simeq \left[\sum_{k=1}^N I_{\al s}^{\mathscr S_k}(f^s)\right]^{\frac1s}\leq \sum_{k=1}^N I^{\mathscr S_k}_{\al,L^s}f\end{equation}
where we have used convexity to pull the power $1/s$ into the sums.  We also have the following norm bounds, which will be used later to prove weighted estimates.
\begin{theorem} \label{Ainftybdds} Suppose $1\leq s<\frac{n}{\al}$, $0<p<\infty$, and $w\in A_\infty$.  Then there exists $C>0$ such that for any sparse family of cubes $\mathscr S$
$$\|I_{\al,L^s}^\mathscr S f\|_{L^p(w)}\leq C\|[I_{\al s}(f^s)]^{\frac1s}\|_{L^p(w)},$$
and
$$\|I_{\al,L^s}^\mathscr S f\|_{L^{p,\infty}(w)}\leq C\big\|[I_{\al s}(f^s)]^{\frac1s}\|_{L^{p,\infty}(w)},$$
for measurable functions $f\geq 0$. 
\end{theorem}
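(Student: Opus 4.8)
The plan is to derive both bounds from a single clean fact: for $A_\infty$ weights a sparse sum is controlled, in $L^p(w)$ and in $L^{p,\infty}(w)$, by the pointwise supremum of its coefficients. First I reduce $I_{\al,L^s}^{\mathscr S}f$ to such a sparse sum. Since $s\al<n$, the operator $I_{s\al}$ is a convergent integral operator; put $F:=[I_{s\al}(f^s)]^{1/s}$, and assume (otherwise there is nothing to prove) that the right-hand sides are finite, so $F<\infty$ a.e. For a sparse $\mathscr S$ and $Q\in\mathscr S$, keeping only the contribution of $Q$ in the definition of $I_{s\al}$ and using $|x-y|\le\sqrt n\,\ell(Q)$ for $x,y\in Q$ gives
\[ I_{s\al}(f^s)(x)\ \ge\ c_{n,s\al}\int_Q\frac{f(y)^s}{|x-y|^{n-s\al}}\,dy\ \ge\ c\,\ell(Q)^{s\al}\fint_Q f^s,\qquad x\in Q. \]
Taking the infimum over $x\in Q$ and an $s$-th root, $\ell(Q)^{\al}\big(\fint_Q f^s\big)^{1/s}\le c^{-1/s}\inf_Q F$, and summing over $Q\ni x$,
\[ I_{\al,L^s}^{\mathscr S}f(x)\ \le\ c^{-1/s}\sum_{Q\in\mathscr S}\big(\inf_Q g\big)\,\mathbf 1_Q(x),\qquad g:=F. \]
Hence it suffices to prove both inequalities with $\sum_{Q\in\mathscr S}(\inf_Q g)\mathbf 1_Q$ in place of $I_{\al,L^s}^{\mathscr S}f$ and $g$ in place of $[I_{s\al}(f^s)]^{1/s}$, for an arbitrary $g\ge0$.

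Next I would establish the strong-type case: for $w\in A_\infty$, $0<p<\infty$ and sparse $\mathscr S$, $\big\|\sum_{Q\in\mathscr S}(\inf_Q g)\mathbf 1_Q\big\|_{L^p(w)}\le C\|g\|_{L^p(w)}$ with $C=C(n,p,[w]_{A_\infty})$. One may take $\mathscr S$ finite. The only structural input is the standard quantitative $A_\infty$ property that $|E_Q|\ge\tfrac12|Q|$ forces $w(Q)\le C\,w(E_Q)$. For $0<p\le1$, subadditivity of $t\mapsto t^p$ together with disjointness of the sets $E_Q$ and $\inf_Q g\le g$ on $E_Q\subseteq Q$ yields at once
\[ \int\!\Big(\sum_Q(\inf_Q g)\mathbf 1_Q\Big)^{\!p}w\ \le\ \sum_Q(\inf_Q g)^p w(Q)\ \le\ C\sum_Q(\inf_Q g)^p w(E_Q)\ \le\ C\sum_Q\int_{E_Q}g^p\,w\ \le\ C\int g^p\,w. \]
For $1<p<\infty$ I would dualize: for $h\ge0$ with $\|h\|_{L^{p'}(w)}=1$, write $\int_Q hw=w(Q)\langle h\rangle_Q^{w}$ and bound $\langle h\rangle^w_Q\le M^{\mathscr D}_wh(y)$ for $y\in E_Q$, where $M^{\mathscr D}_w$ is the dyadic maximal operator for the measure $w\,dx$; then disjointness of the $E_Q$, the weighted maximal theorem on $L^{p'}(w\,dx)$ (valid for every weight, since the underlying measure is $w\,dx$), and Hölder give $\sum_Q(\inf_Q g)\int_Q hw\le C\int g\,M^{\mathscr D}_wh\,w\le C\|g\|_{L^p(w)}$, and taking the supremum over $h$ closes this case.

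For the weak-type bound I would observe that $Tg:=\sum_{Q\in\mathscr S}(\inf_Q g)\mathbf 1_Q$ is monotone and sublinear ($\inf_Q(g_1+g_2)\le\inf_Q g_1+\inf_Q g_2$), and obtain it from the strong bounds just proved at two exponents $q_0<p<q_1$ via the Marcinkiewicz splitting $g=\min(g,\lambda)+(g-\lambda)_+$: this gives $w(\{Tg>2\lambda\})\le C\lambda^{-q_1}\int\min(g,\lambda)^{q_1}w+C\lambda^{-q_0}\int(g-\lambda)_+^{q_0}w$, and a routine computation against the distribution function of $g$ bounds the right-hand side by $C\|g\|_{L^{p,\infty}(w)}^{p}\lambda^{-p}$. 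Feeding $g=F$ into these two estimates and combining with the pointwise inequality of the first paragraph proves both displayed inequalities in Theorem \ref{Ainftybdds}.

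The heart of the matter is the sparse comparison lemma of the second paragraph: a sparse family has pointwise unbounded overlap, yet the $A_\infty$ comparison $w(Q)\le C\,w(E_Q)$ renders that overlap harmless once integrated against $w$. The case $p\le1$ is immediate, but $p>1$ genuinely needs the duality argument rather than mere subadditivity, and the weak-type endpoint is then a soft interpolation consequence; I expect the $p>1$ duality step — in particular tracking the dependence of the constant on $[w]_{A_\infty}$ through the weighted maximal bound — to be the most delicate point. Finally, it is worth noting that the hypothesis $s<n/\al$ is exactly what makes $I_{s\al}(f^s)$, hence the right-hand side, meaningful, and that the argument never uses $s\ge1$: it works for any $0<s<n/\al$.
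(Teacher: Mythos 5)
Your proof is correct, but it follows a genuinely different route from the paper's. The paper first establishes (its Theorem~\ref{AinftyMalLs}) the $L^1(w)$ bound $\int |I_{\al,L^s}^{\mathscr S}f|\,w\le C\int (M_{\al,L^s}f)\,w$ for all $w\in A_\infty$, using exactly the same structural input you use (sparse sets $E_Q$, the $A_\infty$ comparison $w(Q)\le Cw(E_Q)$, and the pointwise fact that $\ell(Q)^\al(\fint_Q f^s)^{1/s}$ is dominated on $Q$ by $M_{\al,L^s}f$), and then invokes the $A_\infty$ Rubio de Francia extrapolation theorem of \cite{CMP} as a black box to upgrade $p=1$ to the full range $0<p<\infty$ with both strong and weak norms; Theorem~\ref{Ainftybdds} then drops out via $M_{\al,L^s}f\le c[I_{\al s}(f^s)]^{1/s}$. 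You instead prove the $L^p(w)$ estimate directly for every $p$: subadditivity of $t\mapsto t^p$ handles $0<p\le1$, duality against the dyadic maximal function $M_w^{\mathscr D}$ for the measure $w\,dx$ handles $1<p<\infty$, and the weak estimate comes from Marcinkiewicz interpolation of the operator $g\mapsto\sum_Q(\inf_Q g)\mathbf 1_Q$ between two strong exponents. This is more self-contained — it replaces the extrapolation machinery by an elementary duality argument whose only nontrivial ingredient is the universal $L^{p'}(w\,dx)$-boundedness of the weighted dyadic maximal operator — at the cost of being longer; it also transparently gives constants depending only on $n$, $p$, and $[w]_{A_\infty}$. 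Two small remarks: (i) replacing your lower bound $\ell(Q)^\al(\fint_Q f^s)^{1/s}\le c\inf_Q F$ by the (obviously true) $\ell(Q)^\al(\fint_Q f^s)^{1/s}\le\inf_Q M_{\al,L^s}f$ would recover the paper's sharper intermediate statement with $M_{\al,L^s}f$ on the right; (ii) your closing observation that the argument needs only $0<s<n/\al$, not $s\ge1$, is correct, though the hypothesis $s\ge1$ is harmless in the paper's applications.
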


By a simple rescaling  argument we see that the operator $f\mapsto [I_{\al s}(f^s)]^{\frac1s}$ satisfies $L^p(\R^n) \ra L^q(\R^n)$ bounds when  $s<p<\frac{n}{\al}$ and $\frac1p-\frac1q=\frac{\al}{n}$ and the weak type bounds $L^s(\R^n) \ra L^{\frac{sn}{n-\al s},\infty}(\R^n)$ at the endpoint $p=s$. 
When $1<r<n$ and $\Omega\in {L^{r,r^*}}(\Sn^{n-1})$ has mean zero, by Theorem \ref{rlessnptwisebd} we have
\begin{equation*}|T_{\Omega,\al} f(x)|\leq c\sum_{k=1}^N I^{\mathscr S_k}_{\al,L^\frac{r'n}{r'+n}}(|\nabla f|)(x).\end{equation*}
The resulting weighted Sobolev estimates for $T_{\Omega,\al}$ are more complicated in general because the class of weights depends on $r'$ and $n$.  Instead, we state the following power weight result.

\begin{theorem} \label{roughpower1} Suppose $1<r<n$ and $\Omega\in L^{r,r^*}(\Sn^{n-1})$ has mean value zero and $0<\al<1+\frac{n}{r'}$.  Then the following inequality holds 
$$\left(\int_{\R^n}\big(|x|^\lambda|T_{\Omega,\al}f(x)|\big)^q\,dx\right)^{\frac1q}\leq C\left(\int_{\R^n}\big(|x|^\lambda|\nabla f(x)|\big)^p\,dx\right)^{\frac1p}$$
for $\frac{r'n}{r'+n}<p<\frac{n}{\al}$, $\frac1q=\frac1p-\frac{\al}{n}$ and
$$\al-\frac{n}{p}<\lambda<1+\frac{n}{r'}-\frac{n}{p}.$$
\end{theorem}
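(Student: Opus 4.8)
The plan is to chain together three facts: the sparse pointwise bound of Theorem~\ref{rlessnptwisebd}, the comparison of $I^{\mathscr S}_{\al,L^s}$ with a genuine Riesz potential provided by Theorem~\ref{Ainftybdds}, and the classical Stein--Weiss power-weight inequality for $I_{\al s}$.

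The first step is to reduce to a single sparse operator. Under the stated hypotheses on $r$, $\al$, and $\Omega$, Theorem~\ref{rlessnptwisebd} furnishes finitely many sparse families $\mathscr S_1,\dots,\mathscr S_N$ (depending on $\nabla f$ but with constants that do not) for which
$$|T_{\Omega,\al}f(x)|\le c\,\|\Omega\|_{L^{r,r^*}(\Sn^{n-1})}\sum_{k=1}^{N}I_{\al,L^s}^{\mathscr S_k}(|\nabla f|)(x),\qquad \frac1s=\frac1n+\frac1{r'}.$$
Hence it suffices to prove, with a constant uniform over all sparse families $\mathscr S$, the one-weight bound
$$\big\||x|^{\lambda}I_{\al,L^s}^{\mathscr S}g\big\|_{L^q(\R^n)}\le C\,\big\||x|^{\lambda}g\big\|_{L^p(\R^n)},\qquad g\ge0,$$
in the stated range, and then sum over $k$ with $g=|\nabla f|$.

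The second step is to pass from the sparse operator to the Riesz potential. Set $w=|x|^{\lambda q}$: the lower bound $\lambda>\al-\frac np$ is exactly $\lambda q>-n$, so $w$ is a power weight in $A_\infty$, while $p<\frac n\al$ forces $q<\infty$. Applying Theorem~\ref{Ainftybdds} with exponent $q$ and weight $w$, and then using $\|[I_{\al s}(g^s)]^{1/s}\|_{L^q(w)}=\|I_{\al s}(g^s)\|_{L^{q/s}(w)}^{1/s}$ together with $\||x|^{\lambda}g\|_{L^p}=\|g^s\|_{L^{p/s}(|x|^{\lambda p})}^{1/s}$, the problem collapses (with $h=g^s$) to the two-weight estimate
$$\big\|I_{\al s}h\big\|_{L^{q/s}(|x|^{\lambda q}\,dx)}\le C\,\|h\|_{L^{p/s}(|x|^{\lambda p}\,dx)},\qquad h\ge0.$$
Writing $P=\frac ps$, $Q=\frac qs$, $\beta=\al s$ and substituting $h=|y|^{-\lambda s}\phi$, this becomes precisely the Stein--Weiss inequality
$$\big\||x|^{\lambda s}\,I_\beta(|\cdot|^{-\lambda s}\phi)\big\|_{L^{Q}(\R^n)}\le C\,\|\phi\|_{L^{P}(\R^n)}.$$

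The final step is to check that the Stein--Weiss hypotheses are exactly our assumptions, so that the $\lambda$-range is matched endpoint for endpoint. The structural condition $1<P\le Q<\infty$ reads $s<p<\frac n\al$, with $P<Q$ because $\al>0$; the inner-weight condition $\lambda s<\frac n{P'}$ unwinds to $\lambda<\frac ns-\frac np=1+\frac n{r'}-\frac np$; the outer-weight condition $-\lambda s<\frac nQ$ unwinds to $\lambda>-\frac nq=\al-\frac np$; the sum condition is $\lambda s+(-\lambda s)=0\ge0$; and the balance relation $\frac1P-\frac\beta n=\frac1Q$ is exactly $\frac1q=\frac1p-\frac\al n$. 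Thus Stein--Weiss applies, and raising to the power $1/s$ and reversing the reductions yields the theorem with $C=C(n,\al,r,p,\lambda)\,\|\Omega\|_{L^{r,r^*}(\Sn^{n-1})}$. I expect the only real obstacle to be twofold: keeping the bookkeeping of exponents straight so that the two endpoint constraints on $\lambda$ emerge exactly as claimed, and justifying the use of Stein--Weiss on the boundary of its admissible region, where the weight exponents sum to zero. This borderline case is legitimate precisely because $\al>0$ makes $P<Q$ strict (if $P=Q$ the balance relation would force $\beta=\al s=0$), so one should cite the form of the Stein--Weiss theorem permitting equality in the sum condition. It is worth noting that the lower endpoint for $\lambda$ is exactly ``$w\in A_\infty$'' (needed to invoke Theorem~\ref{Ainftybdds}) and the upper endpoint is exactly the $L^{P'}$-integrability of the Stein--Weiss kernel near infinity, so the $\lambda$-range in the statement is the natural one.
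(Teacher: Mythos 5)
Your proof is correct and follows essentially the same route as the paper's. The paper proves Theorem~\ref{roughpower1} by combining the sparse pointwise bound of Theorem~\ref{rlessnptwisebd} with Theorem~\ref{oneweightthm}, and then computing for which $\lambda$ the power weight $w(x)=|x|^\lambda$ satisfies $w^s\in A_{p/s,q/s}$; you instead invoke Theorem~\ref{Ainftybdds} to pass to $[I_{\al s}(\cdot^s)]^{1/s}$ and then apply the Stein--Weiss inequality for $I_{\al s}$. These two final steps are equivalent content: Theorem~\ref{oneweightthm} is itself proved by the same rescaling $\|wM_{\al,L^s}f\|_{L^q}=\|w^sM_{\al s}(|f|^s)\|_{L^{q/s}}^{1/s}$ plus the Muckenhoupt--Wheeden $A_{p,q}$ theorem, and Muckenhoupt--Wheeden restricted to power weights on the slice $\gamma+\delta=0$ is exactly the version of Stein--Weiss you cite. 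Your exponent bookkeeping is correct ($-n/q=\al-n/p$ and $n/s-n/p=1+n/r'-n/p$), and you are right both that the lower endpoint for $\lambda$ is exactly the $A_\infty$ hypothesis needed for Theorem~\ref{Ainftybdds} and that the borderline $\gamma+\delta=0$ is permitted in Stein--Weiss because $P<Q$ strictly whenever $\al>0$.
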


The operators $T_{\Omega,\al}$ become more difficult to analyze when $\Omega \in L^1(\Sn^{n-1})$.  For an example of this phenomenon see the example of Honz\'ik \cite{Hon1} for $T_\Omega=T_{\Omega,1}$. In fact most results assume $\Omega \in L^1\log L(\Sn^{n-1})$ (see \cite{Graf}).  In our context we notice that when $r=1$ the restriction $\al<1+\frac{n}{r'}$ means we can only consider the hypersingular case $0<\al<1$.  In this case, we can work with the class $L^1(\log L)^{\frac1{n'}}(\Sn^{n-1})$. 

\begin{theorem} \label{LlogLendpt} Suppose $0<\al<1$ and $\Omega \in L^1(\log L)^{\frac1{n'}}(\Sn^{n-1})$ has mean zero.  Then there exist sparse families $\mathscr S_k\subseteq \D_k$, $k=1,\ldots,N$ such that
\begin{equation*} |T_{\Omega,\al}f(x)|\leq c\|\Omega\|_{ L^1(\log L)^{\frac1{n'}}(\Sn^{n-1})} \sum_{k=1}^N I^{\mathscr S_k}_{\al, L^n}(|\nabla f|)(x).\end{equation*}
\end{theorem}

The operators $I^\mathscr S_{\al,L^n}$ satisfy $L^p(\R^n) \ra L^q(\R^n)$ mappings for $n<p<\frac{n}{\al}$ and $\frac1q=\frac1p-\frac{\al}{n}$, where note that $p=n$ corresponds to the weak endpoint.  The following mapping properties for $T_{\Omega,\al}$ now follow, which seem completely new.

\begin{theorem} Suppose $0<\al<1$ and $\Omega\in L^1(\log L)^{\frac{1}{n'}}(\Sn^{n-1})$ has mean zero. Then 
$$T_{\Omega,\al}:\dot W^{1,p}(\R^n)\ra L^{q}(\R^n) $$
for $n<p<\frac{n}{\al}$ and $\frac1q=\frac1p-\frac{\al}{n}$ and 
$$T_{\Omega,\al}:\dot W^{1,n}(\R^n)\ra L^{\frac{n}{1-\al},\infty}(\R^n)$$
\end{theorem}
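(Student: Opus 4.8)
The plan is to read off both mappings directly from the pointwise estimate of Theorem~\ref{LlogLendpt} combined with the $L^p\to L^q$ and weak endpoint behavior of the sparse operators $I^{\mathscr S}_{\al,L^n}$. Fix first $f\in C_c^\infty(\R^n)$. Theorem~\ref{LlogLendpt} supplies finitely many sparse families $\mathscr S_1,\dots,\mathscr S_N$ (depending on $f$) with
$$|T_{\Omega,\al}f(x)|\le c\,\|\Omega\|_{L^1(\log L)^{\frac1{n'}}(\Sn^{n-1})}\sum_{k=1}^N I^{\mathscr S_k}_{\al,L^n}(|\nabla f|)(x),$$
so the whole problem reduces to a bound, uniform in $\mathscr S$, for $\|I^{\mathscr S}_{\al,L^n}g\|_{L^q(\R^n)}$, respectively $\|I^{\mathscr S}_{\al,L^n}g\|_{L^{\frac{n}{1-\al},\infty}(\R^n)}$, in terms of $\|g\|_{L^p(\R^n)}$, applied with $g=|\nabla f|$; after that one sums the $N$ contributions and passes to the closure $\dot W^{1,p}(\R^n)$ by density.

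For the bound on $I^{\mathscr S}_{\al,L^n}$ I would invoke Theorem~\ref{Ainftybdds} with $s=n$ and the trivial weight $w\equiv 1\in A_\infty$: this gives, for every sparse family $\mathscr S$,
$$\|I^{\mathscr S}_{\al,L^n}g\|_{L^q(\R^n)}\le C\big\|[I_{\al n}(g^n)]^{1/n}\big\|_{L^q(\R^n)}\quad\text{and}\quad\|I^{\mathscr S}_{\al,L^n}g\|_{L^{q,\infty}(\R^n)}\le C\big\|[I_{\al n}(g^n)]^{1/n}\big\|_{L^{q,\infty}(\R^n)}.$$
Here the hypothesis $0<\al<1$ is used precisely to guarantee $n<\tfrac n\al$, hence $\al n<n$, so that $I_{\al n}$ is a legitimate Riesz potential. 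The rescaling argument recorded in the text---equivalently, substituting $g^n$ for the argument and using $\|h^{1/n}\|_{L^{nm}}=\|h\|_{L^m}^{1/n}$ together with the analogous identity for weak norms to reduce to \eqref{Ialbds} and \eqref{weakIalbds} for $I_{\al n}$---shows that $g\mapsto[I_{\al n}(g^n)]^{1/n}$ maps $L^p(\R^n)\to L^q(\R^n)$ when $n<p<\tfrac n\al$, $\tfrac1q=\tfrac1p-\tfrac\al n$, and maps $L^n(\R^n)\to L^{\frac{n}{1-\al},\infty}(\R^n)$ at the endpoint $p=n$ (note $\tfrac{n\cdot n}{n-\al n}=\tfrac{n}{1-\al}$). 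Combining the two displays yields $I^{\mathscr S}_{\al,L^n}:L^p(\R^n)\to L^q(\R^n)$ for $n<p<\tfrac n\al$ and $I^{\mathscr S}_{\al,L^n}:L^n(\R^n)\to L^{\frac{n}{1-\al},\infty}(\R^n)$, with constants independent of $\mathscr S$.

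It then remains to collect terms. For $n<p<\tfrac n\al$, taking $L^q$ norms in the first display and using the triangle inequality over the $N$ sparse families gives $\|T_{\Omega,\al}f\|_{L^q}\le CN\,\|\Omega\|_{L^1(\log L)^{\frac1{n'}}(\Sn^{n-1})}\|\nabla f\|_{L^p}$; at $p=n$ the same computation applies after replacing the triangle inequality by the quasi-triangle inequality of $L^{\frac{n}{1-\al},\infty}$, which costs only a constant depending on $N$. This proves both inequalities for $f\in C_c^\infty(\R^n)$. Finally, given $f\in\dot W^{1,p}(\R^n)$, pick $f_j\in C_c^\infty(\R^n)$ with $\nabla f_j\to\nabla f$ in $L^p(\R^n)$; the a priori bounds make $(T_{\Omega,\al}f_j)$ Cauchy in $L^q(\R^n)$, respectively in the complete quasi-Banach space $L^{\frac{n}{1-\al},\infty}(\R^n)$, so it converges to a limit that defines $T_{\Omega,\al}f$, and the estimates persist in the limit. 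I do not anticipate a genuine obstacle: the substance is already contained in Theorems~\ref{LlogLendpt} and~\ref{Ainftybdds}, and what is left is exponent bookkeeping---in particular checking $\al n<n$---together with a standard density extension, the only mild nuisance being the failure of the triangle inequality in the weak space at the endpoint $p=n$.
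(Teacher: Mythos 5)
Your proposal is correct and follows essentially the same route as the paper: invoke the pointwise sparse domination from Theorem~\ref{LlogLendpt}, reduce the $L^p\to L^q$ and weak-endpoint bounds for $I^{\mathscr S}_{\al,L^n}$ to the operator $g\mapsto[I_{\al n}(g^n)]^{1/n}$ via Theorem~\ref{Ainftybdds} with $w\equiv 1$, and then use the classical (weak-)boundedness of $I_{\al n}$ through the stated rescaling. The paper leaves most of this bookkeeping implicit (and does not write out the density extension to $\dot W^{1,p}$), so your write-up is simply a more detailed version of the intended argument.
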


Finally, we may use Theorem \ref{LlogLendpt} to obtain the weighted norm bounds from Section \ref{weights} to derive power weighted norm estimates for the hypersingular rough integral $T_{\Omega,\al}$ for $\Omega \in L(\log L)^{\frac1{n'}}(\Sn^{n-1})$.
\begin{theorem} \label{roughpower2} Suppose $0<\al<1$ and $\Omega\in L(\log L)^{\frac1{n'}}(\Sn^{n-1})$ with mean value zero.  Then the following inequality holds  
$$\left(\int_{\R^n}\big(|x|^\lambda|T_{\Omega,\al}f(x)|\big)^q\,dx\right)^{\frac1q}\leq C\left(\int_{\R^n}\big(|x|^\lambda|\nabla f(x)|\big)^p\,dx\right)^{\frac1p}$$
for $n<p<\frac{n}{\al}$, $\frac1q=\frac1p-\frac{\al}{n}$ and
$$\al-\frac{n}{p}<\lambda<1-\frac{n}{p}.$$
\end{theorem}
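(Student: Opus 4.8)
The plan is to combine the pointwise sparse bound of Theorem~\ref{LlogLendpt} with the $A_\infty$ comparison of Theorem~\ref{Ainftybdds} and the classical Stein--Weiss inequality for the Riesz potential. By Theorem~\ref{LlogLendpt}, since $0<\al<1$ and $\Omega\in L(\log L)^{\frac1{n'}}(\Sn^{n-1})$ has mean zero, there are sparse families $\mathscr S_1,\dots,\mathscr S_N$ with
$$|T_{\Omega,\al}f(x)|\leq c\,\|\Omega\|_{L(\log L)^{\frac1{n'}}(\Sn^{n-1})}\sum_{k=1}^N I^{\mathscr S_k}_{\al,L^n}(|\nabla f|)(x).$$
Hence it suffices to show that for an arbitrary sparse family $\mathscr S$ and arbitrary $g\geq 0$,
$$\big\| |x|^\lambda\,I^{\mathscr S}_{\al,L^n}g\big\|_{L^q(\R^n)}\leq C\,\big\| |x|^\lambda g\big\|_{L^p(\R^n)},$$
with $C$ independent of $\mathscr S$; applying this with $g=|\nabla f|$ and summing over $k=1,\dots,N$ then gives the theorem.

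The first step is to replace the sparse operator by a genuine Riesz potential. Because $\tfrac nq=\tfrac np-\al$, the lower bound $\lambda>\al-\tfrac np$ is exactly the statement $\lambda q>-n$, so the power weight $w:=|x|^{\lambda q}$ lies in $A_\infty$. Since $0<\al<1$ we have $n<\tfrac n\al$, so Theorem~\ref{Ainftybdds} (with $s=n$ and exponent $q$) applies and gives
$$\big\| I^{\mathscr S}_{\al,L^n}g\big\|_{L^q(|x|^{\lambda q})}\leq C\,\big\|\big[I_{\al n}(g^n)\big]^{\frac1n}\big\|_{L^q(|x|^{\lambda q})}.$$
Next I would substitute $h=g^n$ and set $P=\tfrac pn$, $Q=\tfrac qn$; a routine rearrangement of the powers of $|x|$ on each side (using $\tfrac qQ=\tfrac pP=n$) rewrites the last display as the single power-weighted Riesz estimate
$$\big\| |x|^{\lambda n}\,I_{\al n}h\big\|_{L^Q(\R^n)}\leq C\,\big\| |x|^{\lambda n}h\big\|_{L^P(\R^n)}.$$

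It remains to recognize this inequality as an instance of the Stein--Weiss inequality for $I_{\al n}$ with output weight $|x|^{\lambda n}$ (that is, $\beta=-\lambda n$ in the standard normalization $\||x|^{-\beta}I_{\al n}h\|_{L^Q}\lesssim \||x|^{\delta}h\|_{L^P}$) and input weight exponent $\delta=\lambda n$, so that $\beta+\delta=0$. The hypotheses of Stein--Weiss translate term by term: $0<\al n<n$ is $0<\al<1$; $1<P\leq Q<\infty$ follows from $n<p<\tfrac n\al$ together with $p<q$; the homogeneity relation $\tfrac1P-\tfrac1Q=\tfrac{\al n-\beta-\delta}{n}$ is just $\tfrac1p-\tfrac1q=\tfrac\al n$; the output condition $\beta<\tfrac nQ$ is exactly $\lambda>\al-\tfrac np$; the input condition $\delta<\tfrac n{P'}$ is exactly $\lambda<1-\tfrac np$; and $\beta+\delta=0\geq 0$ is admissible here because $P<Q$ strictly (the combined-exponent-zero case of Stein--Weiss fails only in the diagonal case $P=Q$). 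Thus Stein--Weiss yields the displayed estimate, and reversing the substitutions and summing over $k$ completes the argument.

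I do not expect a genuine obstacle: the only delicate points are the exponent bookkeeping in the two reductions and quoting Stein--Weiss in the off-diagonal regime $P<Q$, $\beta+\delta=0$. One should also record that for $f\in C_c^\infty(\R^n)$ the function $g=|\nabla f|$ is bounded with compact support, so $g^n\in L^1\cap L^\infty(\R^n)$, $I_{\al n}(g^n)$ is finite almost everywhere, and all the quantities above are meaningful; the estimate for general admissible $f$ then follows by density.
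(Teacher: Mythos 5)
Your argument is correct and the endpoint arithmetic checks out: the lower endpoint $\lambda>\al-\tfrac np$ is exactly $\lambda q>-n$, which is both the $A_\infty$ membership needed to invoke Theorem~\ref{Ainftybdds} and the lower endpoint of the Stein--Weiss (equivalently, Muckenhoupt--Wheeden $A_{P,Q}$) condition for $|x|^{\lambda n}$; the rescaling $h=g^n$, $P=p/n$, $Q=q/n$ is carried out consistently; and $P>1$, $P<Q$, and $0<\al n<n$ all hold in the stated range, so the case $\gamma+\delta=0$ is admissible.

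Your route differs mildly from the paper's. The paper never calls Theorem~\ref{Ainftybdds} or Stein--Weiss here: it applies Theorem~\ref{oneweightthm} (the rescaled Muckenhoupt--Wheeden characterization) directly, item~(3), with $s=n$ and $w=|x|^{\lambda}$, and simply computes that $w^n\in A_{p/n,\,q/n}$ iff $-\tfrac nq<\lambda<\tfrac ns-\tfrac np=1-\tfrac np$, which is the stated range. You instead take a two-step detour: first use the $A_\infty$ comparison (Theorem~\ref{Ainftybdds}) to trade the sparse operator for $[I_{\al n}(\cdot^n)]^{1/n}$, and then quote Stein--Weiss for $I_{\al n}$ after the power substitution. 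For equal power weights on both sides, Stein--Weiss in the off-diagonal case with $\beta+\delta=0$ reduces to precisely the Muckenhoupt--Wheeden $A_{P,Q}$ result that Theorem~\ref{oneweightthm} packages, so the two proofs land on the same classical input; yours is just a bit more indirect since Theorem~\ref{oneweightthm} already gives the weighted bound for the sparse operator itself without the intermediate $A_\infty$ comparison. It would also be worth noting explicitly (you allude to it) that Stein--Weiss in the boundary case $\beta+\delta=0$ requires strict inequality $P<Q$, which here follows from $\al>0$; in the paper's language this is simply the $A_{P,Q}$ condition being an open condition.
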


\subsection{Plan of paper} 

In the next section, Section \ref{prelim}, we will collect some of the necessary background needed to prove our results. In Section \ref{pointwisebds}, we will prove all our pointwise bounds from Theorems \ref{poinwisecriticalT}, \ref{rlessnptwisebd}, and \ref{LlogLendpt} along with Theorem \ref{characterization}. In Section \ref{maxbounds} we study bounds for maximal functions which include $M_\Omega$ and other maximal functions with cancellation. Section \ref{weights} will be devoted to the study of the mapping properties of the dominating operators.  While bounds for $I_\al$ and $M_\al$ are well-known, it seems that the bounds for the $L^s$ versions of these operators are not known, particularly, weighted inequalities.  

\subsubsection*{Acknowledgement} We would like to thank the anonymous referee for several valuable comments that improved the manuscript.

\section{Preliminaries}\label{prelim}
A key tool in our proofs of the pointwise bounds is the local Poincar\'e-Sobolev inequality
\begin{equation}\label{poincare}\left(\fint_Q|f-f_Q|^{q}\right)^{\frac1{q}}\leq C\ell(Q)\left(\fint_Q |\nabla f|^p\right)^{\frac1p}\end{equation}
which holds for $1\leq p<n$, $1\leq q\leq p^*$, and $f\in C^1(Q)$.  We will use an improved version in the scale of Lorentz spaces
\begin{equation}\label{Lorpoincare}\|f-f_Q\|_{L^{p^*,p}(Q)}\leq C\ell(Q)\left(\fint_Q |\nabla f|^p\right)^{\frac1p}\end{equation}
for the same values $1\leq p<n$.  Here the $\|\cdot\|_{L^{p^*,p}(Q)}$ is the Lorentz average over the cube $Q$ as defined in \eqref{lorentzdefn}.  The improvement is due to O'Neill \cite{On} and Peetre \cite{Pe}, but a simpler proof can be found in \cite{MP}. 

When $p=n$ inequality \eqref{poincare} does not hold, but the so-called Trudinger inequality holds, namely,
\begin{equation}\label{trud}\|f-f_Q\|_{\exp L^{n'}(Q)}\leq C\left(\int_Q|\nabla f|^n\right)^{\frac1n}\end{equation}
where again $\|\cdot \|_{\exp L^{n'}(Q)}$ is the the $\exp L^{n'}$ average on $Q$.  We also remark that any of the inequalities \eqref{poincare}, \eqref{Lorpoincare}, or \eqref{trud} hold for balls instead of cubes with the proper adjustment of using the radius instead of the sidelength.  We refer to \cite{MacP} for a proof of \eqref{trud} in a general framework that does not use smoothness. 

We will also use the dyadic machinery, particularly, Lemma \ref{thirdtrick}. Given a dyadic grid $\D$ and $k\in \Z$ let
$$\D^k=\{Q\in \D:\ell(Q)=2^k\}.$$
We will use the well-known collection of dyadic grids for $t\in \{0,\frac13\}^n$ defined by
\begin{equation}\label{dyadicgrids} \D_t=\{2^k([0,1)^n+m+(-1)^kt):k\in \Z,m\in \Z^n\}.\end{equation}
These dyadic grids are important because every cube can be ``approximated" by a dyadic cube from one of these grids.  Namely, we have the following lemma.
\begin{lemma}\label{thirdtrick} Given any cube $Q$, there exists $t\in \{0,\frac13\}^n$ and $Q_t\in \D_t$ such that $Q\subseteq Q_t$ and $\ell(Q_t)\leq 6\ell(Q).$
\end{lemma}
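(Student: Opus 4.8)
The plan is to invoke the classical ``one-third trick''. I would begin by fixing $Q$, setting $\ell=\ell(Q)$, and choosing $k\in\Z$ to be the least integer with $2^k\ge 3\ell$; by minimality $2^{k-1}<3\ell$, so that $3\ell\le 2^k<6\ell$. Every approximating cube $Q_t$ produced below will have sidelength $2^k$, so automatically $\ell(Q_t)=2^k<6\ell(Q)$, which is the claimed bound; the whole content is thus to find some $t\in\{0,\tfrac13\}^n$ and a sidelength-$2^k$ cube of $\D_t$ that contains $Q$.

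I would then reduce to a one-dimensional statement. Write $Q=I_1\times\cdots\times I_n$, where each $I_i$ is an interval of length $\ell$, and observe from \eqref{dyadicgrids} that a sidelength-$2^k$ cube of $\D_t$ is exactly the product over $i$ of sidelength-$2^k$ intervals of the $i$-th coordinate grid, which is the standard grid of intervals $\{[2^km,2^k(m+1)):m\in\Z\}$ when $t_i=0$ and this grid translated by $(-1)^k 2^k/3$ when $t_i=\tfrac13$. Since $t$ ranges over all of $\{0,\tfrac13\}^n$, the component $t_i$ may be chosen independently in each coordinate, so it suffices to prove: every interval $I$ of length $\ell\le 2^k/3$ is contained either in a sidelength-$2^k$ interval of the standard grid or in a sidelength-$2^k$ interval of its translate by $(-1)^k 2^k/3$.

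This one-dimensional dichotomy is the main obstacle, and it is where I would concentrate. Since $\ell<2^k$, the interval $I$ can contain at most one point of $2^k\Z$. If it contains none, it already lies in a single sidelength-$2^k$ standard interval and we take $t_i=0$. If $I$ straddles a point $2^km$, then $\ell\le 2^k/3$ forces $I\subseteq(2^km-2^k/3,\,2^km+2^k/3)$, and a direct check shows this open interval contains no point of the shifted lattice $2^k\Z+(-1)^k 2^k/3$, since the two nearest points of that lattice sit at distances $2^k/3$ and $\tfrac23\cdot 2^k$ from $2^km$ on opposite sides; hence $I$ lies in one sidelength-$2^k$ interval of the shifted grid and we take $t_i=\tfrac13$. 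The sign $(-1)^k$ merely interchanges the two sides, so both parities of $k$ are covered in the same way. Carrying this out for $i=1,\dots,n$ produces $t\in\{0,\tfrac13\}^n$ and a cube $Q_t\in\D_t$ of sidelength $2^k$ with $Q\subseteq Q_t$ and $\ell(Q_t)=2^k\le 6\ell(Q)$, as required.
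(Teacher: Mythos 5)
Your argument is correct and is exactly the classical ``one-third trick,'' which the paper states as Lemma~\ref{thirdtrick} without proof (it is treated as a known fact about the grids $\D_t$). The reduction to one dimension via the product structure of $\D_t$, the choice of the minimal $k$ with $3\ell \le 2^k < 6\ell$, and the observation that an interval of length $\le 2^k/3$ that meets $2^k\Z$ must avoid the shifted lattice $2^k\Z+(-1)^k 2^k/3$ are precisely the standard steps.

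One small point worth tightening: your dichotomy ``contains no point of $2^k\Z$'' versus ``straddles a point $2^k m$'' is not quite exhaustive if $Q$ is taken to be a closed cube and $\ell = 2^k/3$ exactly. If the 1D factor is $I=[2^k m,\,2^k m + 2^k/3]$, then $I$ meets $2^k\Z$ at its left endpoint but is \emph{not} contained in the open interval $(2^k m - 2^k/3,\,2^k m + 2^k/3)$ (its right endpoint is excluded), so the shifted grid fails to capture it; however $I\subseteq[2^k m,\,2^k(m+1))$, so the unshifted grid works and the lemma still holds. If instead one adopts the natural convention of half-open cubes (consistent with the definition of $\D_t$ in \eqref{dyadicgrids}), this edge case does not arise and your argument is airtight as written.
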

Recall that the cubes in $\D^k$ for a fixed $k$ form a partition of $\R^n$.  We have the following variant of Lemma \ref{thirdtrick} which will be needed later.  

\begin{lemma} \label{dyadicthird} Suppose $Q$ is a cube with $\ell(Q)=2^k$ for some $k$.  Then there exists $t\in \{0,\frac13\}^n$ and $Q_t\in \D_t^{k+3}$ such that $Q\subseteq Q_t$.
\end{lemma}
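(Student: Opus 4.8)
The plan is to deduce this refinement directly from the one-third trick of Lemma~\ref{thirdtrick}, together with the basic fact that each $\D_t$ is an honest dyadic grid and hence enjoys the nested-or-disjoint property. Nothing deeper is needed; the only role played by the hypothesis $\ell(Q)=2^k$ is to pin down the level of the enclosing cube.

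First I would apply Lemma~\ref{thirdtrick} to $Q$: there exist $t\in\{0,\frac13\}^n$ and a cube $R\in\D_t$ with $Q\subseteq R$ and $\ell(R)\le 6\ell(Q)=6\cdot 2^k$. Since $R$ belongs to the dyadic grid $\D_t$, its sidelength is a power of two, say $\ell(R)=2^m$. The inclusion $Q\subseteq R$ forces $2^k=\ell(Q)\le\ell(R)=2^m$, while $2^m\le 6\cdot 2^k<8\cdot 2^k=2^{k+3}$ forces $m\le k+2$. Thus $m\in\{k,k+1,k+2\}$; in particular $m\le k+3$. Now, because $\D_t$ is a dyadic grid, the cubes of sidelength $2^{k+3}$ in $\D_t$ partition $\R^n$, and the nested-or-disjoint property gives a (unique) cube $Q_t\in\D_t^{k+3}$ — namely the $(k+3-m)$-fold dyadic parent of $R$ — with $R\subseteq Q_t$. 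Combining the two inclusions yields $Q\subseteq R\subseteq Q_t$ with $Q_t\in\D_t^{k+3}$, which is exactly the assertion.

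There is essentially no obstacle here. The one point that one relies on — and which the paper already uses when invoking Lemma~\ref{thirdtrick} — is that the families $\D_t$ defined in~\eqref{dyadicgrids} really are dyadic grids: the alternating shift $(-1)^k t$ is precisely what makes the grid at level $k$ nest inside the grid at level $k+1$, so that every cube of $\D_t$ sits inside a unique cube of $\D_t$ of any prescribed larger sidelength. Granting this standard fact, the argument above is complete.
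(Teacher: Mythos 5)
Your proof is correct and follows the same route as the paper: apply Lemma~\ref{thirdtrick} to get $R\in\D_t$ with $Q\subseteq R$ and $\ell(R)\le 6\cdot 2^k<2^{k+3}$, then take $Q_t$ to be the unique ancestor of $R$ in $\D_t^{k+3}$. The extra remarks (that $\ell(R)$ is a power of two, hence actually $\ell(R)\le 2^{k+2}$, and that the dyadic parent exists because $\D_t$ is a genuine dyadic grid) are details the paper leaves implicit, but they do not change the argument.
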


\begin{proof} Given the cube $Q$ with $\ell(Q)=2^k$, by Lemma \ref{thirdtrick} there exists $P_t\in \D_t$ such that $Q\subseteq P_t$ and $\ell(P_t)\leq 6\ell(Q)\leq 2^{k+3}$.  Let $Q_t$ be the unique cube in $\D_t^{k+3}$ containing $P_t$ and hence also $Q$.
\end{proof}

We will also make use of the dyadic fractional integral operator associated with a dyadic grid $\D$
$$I_\al^\D f=\sum_{Q\in \D} \ell(Q)^\al \left(\fint_Q f\right)\mathbf 1_Q.$$
Notice the difference between $I_\al^\D$ and $I_\al^\mathscr S$ is that the sum is over all dyadic cubes from $\D$, not just a sparse family $\mathscr S$.  In \cite{CM2} it is shown that the discrete operator $I_\al^\D$ and the integral operator $I_\al$ are pointwise equivalent, this is in contrast to the singular integral case.  We have the following proposition from \cite{CM2}.

\begin{proposition}\label{dyadicfracbdds} Suppose $0<\al<n$. If $\D$ is any dyadic grid and $f\geq 0$, then we have
$$I_\al^\D f(x)\leq c I_\al f(x),$$
and if $\big\{\D_t:t\in \{0,\frac13\}^n\big\}$ are the dyadic grids defined in \eqref{dyadicgrids}, then
$$I_\al f(x)\leq c \sum_{t\in \{0,\frac13\}^n} I_\al^{\D_t}f(x).$$
If $f\in L^\infty_c(\R^n)$ and $f\geq 0$, then given any dyadic grid $\D$, there exists a sparse subset $\mathscr S\subseteq \D$ such that
$$I_\al^\D f(x)\leq c I_\al^\mathscr S f(x).$$
\end{proposition}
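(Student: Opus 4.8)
The plan is to prove the three displayed inequalities in turn; only the third is substantial. For the first, fix $x$ and for $k\in\Z$ let $Q_k$ be the unique cube of $\D$ with $\ell(Q_k)=2^k$ and $x\in Q_k$. Since $\ell(Q_k)^\al\fint_{Q_k}f=2^{k(\al-n)}\int_{Q_k}f$, interchanging the (nonnegative) sum and integral gives $I_\al^\D f(x)=\int_{\R^n} f(y)\big(\sum_{k:\,y\in Q_k}2^{k(\al-n)}\big)\,dy$. The cubes $\{Q_k\}$ containing $x$ are nested, so $\{k:y\in Q_k\}=\{k\ge k_0(y)\}$ for some $k_0$ with $2^{k_0}\ge c_n|x-y|$; since $\al-n<0$ the geometric tail $\sum_{k\ge k_0}2^{k(\al-n)}$ is bounded by $C_{n,\al}|x-y|^{\al-n}$, and integrating against $f$ yields $I_\al^\D f(x)\le c\,I_\al f(x)$.

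For the second inequality I would split the integral defining $I_\al f(x)$ over the dyadic annuli $2^{k-1}\le|x-y|<2^k$, on which $|x-y|^{\al-n}$ is comparable to $2^{k(\al-n)}$, to get $I_\al f(x)\le C\sum_k 2^{k(\al-n)}\int_{B(x,2^k)}f$. Let $P_k$ be the cube centered at $x$ with $\ell(P_k)=2^{k+1}$, so that $B(x,2^k)\subseteq P_k$ and the $k$-th term is at most $C_n\,\ell(P_k)^\al\fint_{P_k}f$. Applying Lemma \ref{thirdtrick} to $P_k$ produces $t_k\in\{0,\tfrac13\}^n$ and $Q_{t_k}\in\D_{t_k}$ with $P_k\subseteq Q_{t_k}$ and $\ell(Q_{t_k})\le 6\ell(P_k)$, whence $\ell(P_k)^\al\fint_{P_k}f\le C_n\,\ell(Q_{t_k})^\al\fint_{Q_{t_k}}f$ with $x\in Q_{t_k}$. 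Because $\ell(Q_{t_k})\in\{2^{k+1},2^{k+2},2^{k+3}\}$, a fixed cube of a grid $\D_t$ equals $Q_{t_k}$ for at most three values of $k$; summing in $k$ and regrouping by grid gives $I_\al f(x)\le C\sum_{t\in\{0,1/3\}^n}I_\al^{\D_t}f(x)$.

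For the third inequality, set $\beta=2^{n+1}$. Since $f\in L^\infty_c(\R^n)$ we have $\fint_Q f\le\|f\|_{L^1}/|Q|\to0$ as $\ell(Q)\to\infty$, so for every $j\in\Z$ the maximal cubes of $\D$ on which $\fint_Q f>\beta^j$ exist; call them $\{Q_i^j\}_i$ and set $\mathscr S=\{Q_i^j:i,j\}$. To see $\mathscr S$ is sparse, fix $P=Q_i^j\in\mathscr S$; the maximal cubes of $\mathscr S$ lying strictly inside $P$ are pairwise disjoint and each has average exceeding $\beta^{j+1}$, while $\int_P f\le\int_{\widehat P}f\le\beta^j|\widehat P|=2^n\beta^j|P|$ by maximality of $P$, so the union of those cubes has measure at most $(2^n/\beta)|P|=\tfrac12|P|$; deleting it from $P$ produces $E_P$ with $|E_P|\ge\tfrac12|P|$, and the resulting $E_P$ are pairwise disjoint. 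For the domination, given a dyadic cube $Q\ni x$ with $\fint_Q f>0$, let $\pi(Q)$ be the smallest cube of $\mathscr S$ containing $Q$ (it exists since $\fint_Q f>\beta^j$ for $j$ sufficiently negative). A short argument using the maximality built into the stopping cubes shows that $P=\pi(Q)$ satisfies $\ell(Q)\le\ell(P)$ and $\fint_Q f\le\beta^2\fint_P f$. Grouping the sum defining $I_\al^\D f(x)$ according to the value of $\pi(Q)$ and using $\sum_{Q\ni x,\,Q\subseteq P}\ell(Q)^\al=C_\al\,\ell(P)^\al$ — a geometric series that converges precisely because $\al>0$ — gives $\sum_{Q:\,\pi(Q)=P}\ell(Q)^\al\fint_Q f\le\beta^2 C_\al\,\ell(P)^\al\fint_P f$, and summing over $P\in\mathscr S$ with $x\in P$ completes the proof.

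The one delicate point is the third inequality. It matters to take the stopping ratio $\beta$ large enough ($\beta=2^{n+1}$) so that the Calder\'on--Zygmund stopping family is genuinely sparse, and — the real crux — to observe that even though infinitely many dyadic cubes may share the same stopping parent $P$, their contributions $\ell(Q)^\al\fint_Q f$ form a geometric series in the scale which converges because $\al>0$, dominated by the single term $\ell(P)^\al\fint_P f$. The first two inequalities are comparatively soft: they amount to matching a sum over scales with an integral over dyadic annuli, with Lemma \ref{thirdtrick} supplying the passage from arbitrary concentric cubes to the fixed grids $\D_t$.
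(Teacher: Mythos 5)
Your argument is correct and, for the sparse-domination part (the only part the paper actually proves, in its $L^s$ form as Lemma \ref{Lsdyadicfracbdds}), it is exactly the Calder\'on--Zygmund stopping-time construction the paper uses: the same stopping ratio $2^{n+1}$, the same pairwise-disjoint majorizing sets $E_P$, and the same geometric-series summation in $\alpha>0$ over scales inside each stopping cube. The first two inequalities are the standard dyadic-annulus comparison and the $3^n$-grids covering via Lemma \ref{thirdtrick}, as in \cite{CM2}.
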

We will need the following lemma which is similar to Proposition \ref{dyadicfracbdds} but for the $L^s$ averages. Given a dyadic grid $\D$ define the $L^s$ dyadic fractional operator  for $f\geq 0$ by
$$I_{\al,L^s}^\D f=\sum_{Q\in \D}\ell(Q)^\al\Big(\fint_Q f^s\Big)^{\frac1s}\mathbf 1_Q.$$
The proof is similar to the sparse bound proof in Proposition \ref{dyadicfracbdds}, but we include it here for completeness.

\begin{lemma}\label{Lsdyadicfracbdds} Suppose $1\leq s<\frac{n}{\al}$ and $f\in L^\infty_c(\R^n)$ satisfies $f\geq 0$. Given any dyadic grid $\D$, there exists a sparse family $\mathscr S=\mathscr S(f)\subseteq \D$ such that
$$I_{\al,L^s}^\D f(x)\leq c I_{\al,L^s}^\mathscr S f(x).$$
\end{lemma}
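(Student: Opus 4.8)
\emph{The plan} is to run a Calder\'on--Zygmund stopping-time decomposition on the dyadic averages of $f^s$; this is exactly the argument behind the sparse domination in Proposition \ref{dyadicfracbdds}, with $\fint_Q f^s$ playing the role previously played by $\fint_Q f$, so that the $L^s$-average $A_s(Q):=\big(\fint_Q f^s\big)^{1/s}$ appears. Fix the dyadic grid $\D$ and set $D=2^{n+1}$. \emph{Step 1 (stopping cubes).} For $a\in\Z$ let $\mathscr G_a$ be the collection of maximal cubes $Q\in\D$ with $\fint_Q f^s>D^a$. Since $f\in L^\infty_c$ we have $\int f^s<\infty$, so an infinite strictly increasing chain of such cubes would force $|Q|<D^{-a}\int f^s$ to stay bounded while $|Q|\to\infty$; hence the maximal cubes exist, and $\mathscr G_a=\varnothing$ once $D^a\ge\|f\|_\infty^s$. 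Put $\mathscr S=\mathscr S(f)=\bigcup_{a\in\Z}\mathscr G_a\subseteq\D$. If $Q\in\mathscr G_a$ then $D^a<\fint_Q f^s\le 2^nD^a$, the upper bound coming from the $\D$-parent of $Q$. Moreover, whenever $R\in\D$ satisfies $A_s(R)>0$, choosing $a$ with $D^a<\fint_R f^s$ shows $R$ lies inside a cube of $\mathscr G_a$; thus every cube contributing to $I^\D_{\al,L^s}f$ is contained in some cube of $\mathscr S$, and we let $\pi(R)\in\mathscr S$ denote the smallest cube of $\mathscr S$ containing such an $R$.

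\emph{Step 2 ($\mathscr S$ is sparse).} Fix $Q\in\mathscr G_a$. One checks that the maximal cubes of $\mathscr S$ lying strictly inside $Q$ are exactly the cubes of $\mathscr G_{a+1}$ strictly contained in $Q$: a cube of $\mathscr G_b$ with $b\le a$ cannot sit strictly inside $Q$, since $\fint_Q f^s>D^a\ge D^b$ would contradict its maximality, while any cube of $\mathscr G_b$ with $b\ge a+1$ has $\fint>D^{a+1}$ and so lies in a cube of $\mathscr G_{a+1}$, which in turn must be strictly inside $Q$ (it cannot contain $Q$, again by maximality of $Q$, nor equal $Q$, as $\fint_Q f^s\le 2^nD^a<D^{a+1}$). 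These cubes are pairwise disjoint with $\fint_P f^s>D^{a+1}$, so
\[
\sum_{\substack{P\in\mathscr G_{a+1}\\ P\subsetneq Q}}|P|<\frac{1}{D^{a+1}}\int_Q f^s\le\frac{2^nD^a}{D^{a+1}}\,|Q|=\tfrac12|Q|
\]
by the choice $D=2^{n+1}$. Hence $E_Q:=Q\setminus\bigcup\{P\in\mathscr S:\ P\subsetneq Q\text{ maximal}\}$ satisfies $|Q|\le 2|E_Q|$, and the sets $E_Q$, $Q\in\mathscr S$, are pairwise disjoint; so $\mathscr S$ is sparse.

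\emph{Step 3 (the pointwise bound).} The sets $\{R\in\D:\ \pi(R)=Q\}$, $Q\in\mathscr S$, partition the cubes contributing to $I^\D_{\al,L^s}f$, and all terms are nonnegative, so for each $x$,
\[
I^\D_{\al,L^s}f(x)=\sum_{Q\in\mathscr S}\ \sum_{R:\,\pi(R)=Q}\ell(R)^\al A_s(R)\,\1_R(x).
\]
If $\pi(R)=Q\in\mathscr G_a$ then $R\subseteq Q$ and $\fint_R f^s\le D^{a+1}<D\,\fint_Q f^s$ (otherwise $R$ lies in a $\mathscr G_{a+1}$-cube strictly inside $Q$, contradicting $\pi(R)=Q$), i.e.\ $A_s(R)\le D^{1/s}A_s(Q)$. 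Hence, for $x\in Q$,
\[
\sum_{\substack{R:\,\pi(R)=Q\\ R\ni x}}\ell(R)^\al A_s(R)\le D^{1/s}A_s(Q)\sum_{\substack{R\in\D:\,R\subseteq Q\\ R\ni x}}\ell(R)^\al=\frac{D^{1/s}}{1-2^{-\al}}\,\ell(Q)^\al A_s(Q),
\]
the last step being the geometric series $\sum_{j\ge0}(2^{-j}\ell(Q))^\al$, which converges since $\al>0$. Summing over $Q\in\mathscr S$ yields $I^\D_{\al,L^s}f(x)\le c\,I^{\mathscr S}_{\al,L^s}f(x)$ with $c=D^{1/s}/(1-2^{-\al})$.

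\emph{Main difficulty.} The genuinely new bookkeeping point, compared with the classical case where one fixes a single cube containing $\operatorname{supp}f$, is that for a general dyadic grid no cube of $\D$ need contain $\operatorname{supp}f$, so the stopping time has no top. Indexing the levels $D^a$ over all of $\Z$ repairs this, but one must then check that the \emph{global} family $\bigcup_a\mathscr G_a$ is sparse, which is what forces the jump factor $D=2^{n+1}$ so that the packing estimate in Step 2 is $\le\tfrac12$. Finally, the hypothesis $s<\tfrac n\al$ (equivalently $\al s<n$) is used to guarantee that $I^\D_{\al,L^s}f$ and $I^{\mathscr S}_{\al,L^s}f$ are finite a.e.\ — the tail over large cubes containing $x$ behaves like $\sum\ell^{\al-n/s}$ — so that the inequality is meaningful.
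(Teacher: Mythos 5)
Your proof is correct and follows essentially the same Calder\'on--Zygmund stopping-time argument as the paper: both level the $L^s$-averages geometrically with the same ratio (the paper takes $a=2^{(n+1)/s}$ for $(\fint_Q f^s)^{1/s}$, which is exactly your $D=2^{n+1}$ for $\fint_Q f^s$), take maximal stopping cubes, verify the packing bound $\tfrac12$ using the $2^n$ doubling of the dyadic parent, and sum the geometric series in $\ell(Q)^\al$. The only difference is cosmetic bookkeeping: the paper first groups contributing cubes into shells $\mathscr C^k$ and then assigns each shell to a stopping ancestor, while you assign each cube $R$ directly to its smallest stopping ancestor $\pi(R)$ and bound $A_s(R)\le D^{1/s}A_s(\pi(R))$, but the estimates are identical.
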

\begin{proof} Let $a>1$ be a number to be chosen later.  For each $k\in \Z$ let 
$$\mathscr C^k=\left\{Q\in \D: a^k<\Big(\fint_Q f^s\Big)^{\frac1s}\leq a^{k+1}\right\}.$$
Then every cube $Q\in \D$ that contributes to the sum of $I_{\al,L^s}^\D f$ belongs to $\mathscr C^k$ for some $k$ and hence
$$ I_{\al,L^s}^\D f=\sum_{Q\in \D}\ell(Q)^\al\Big(\fint_Q f^s\Big)^{\frac1s}\mathbf 1_Q\leq a\sum_{k\in \Z} a^k\sum_{Q\in \mathscr C^k} \ell(Q)^\al \mathbf 1_Q.$$
Now let $\mathscr S^k$ be the collection of all maximal dyadic cubes such that 
$$\Big(\fint_Q f^s\Big)^{\frac1s}>a^k.$$
If $\mathscr C^k$ is not empty, then such maximal cubes exist because $f\in L^\infty_c(\R^n)$. The collection of cubes in $\mathscr S^k$ is pairwise disjoint (for a fixed $k$) and satisfies
$$\Big(\fint_Q f^s\Big)^\frac1s\leq 2^{\frac{n}{s}}a^k$$
by maximality. Moreover, every cube $P\in \mathscr C^k$ is a subset of a unique $Q\in \mathscr S^k$.  Hence
 \begin{align*}
 a\sum_{k\in \Z} a^k\sum_{Q\in \mathscr C^k} \ell(Q)^\al \mathbf 1_Q&\leq a\sum_{k\in \Z} a^k\sum_{Q\in \mathscr S^k} \sum_{{P\in \mathscr C^k \atop P\subseteq Q}}\ell(P)^\al \mathbf 1_P \\ 
 &\leq a\sum_{k\in \Z} a^k\sum_{Q\in \mathscr S^k} \sum_{j=0}^\infty\sum_{{P\in \D(Q)\atop \ell(P)=2^{-j}\ell(Q)}} \ell(P)^\al \mathbf 1_P\\
 &=\frac{a}{1-2^{-\al}}\sum_{k\in \Z} a^k\sum_{Q\in \mathscr S^k} \ell(Q)^\al\mathbf 1_Q\\
 &\leq c_\al \sum_{k\in \Z}\sum_{Q\in \mathscr S^k}\ell(Q)^\al\Big(\fint_Q f^s\Big)^{\frac1s}\mathbf 1_Q.
  \end{align*}
If we let $\mathscr S=\bigcup_k \mathscr S^k$ then the desired result will follow provided we can prove that $\mathscr S$ is a sparse family.  To see this fix $Q\in \mathscr S^k$, then the sets are nested in the following way: every $P\in \mathscr S^{k+1}$ satisfies $P\subseteq Q$ for some $Q\in \mathscr S^k$. Hence if we let $\Lambda^k=\bigcup\{Q:Q\in \mathscr S^k\}$ then we have
\begin{multline*}|Q\cap \Lambda^{k+1}|=\sum_{{P\in \mathscr S^{k+1} \atop P\subseteq Q}}|P|<\frac{1}{a^{(k+1)s}}\sum_{{P\in \mathscr S^{k+1} \atop P\subseteq Q}}\int_P f^s 
\leq \frac{1}{a^{(k+1)s}}\int_Q f^s\leq \frac{2^n}{a^s}|Q|. \end{multline*}
By setting $a=2^{\frac{n+1}{s}}$ we arrive at $|Q\cap \Lambda^{k+1}|\leq \frac12|Q|$ and
the sparse condition now follows by setting $E_Q=Q\backslash \Lambda^{k+1}$.
\end{proof}

We will also need some basic facts about Orlicz spaces.  Given a Young function $\Phi(t)$ we define the Orlicz average of a measurable set $E$ (usually a cube or a ball),
$$\|u\|_{L^\Phi(E)}=\inf\left\{\lambda>0:\fint_E\Phi\Big(\frac{|u(x)|}{\lambda}\Big)\,dx\leq 1\right\}.$$
The associate space is defined by the Young function 
$$\widebar{\Phi}(t)=\sup_{s>0}(st-\Phi(s)),$$
which satisfies $\Phi^{-1}(t)\widebar{\Phi}^{-1}(t)\simeq t$.  We also have the following H\"older inequality for Young functions
\begin{equation}\label{orliczholder} \fint_E |fg|\,\leq c\|f\|_{L^\Phi(E)}\|g\|_{L^{\widebar{\Phi}}(E)}.\end{equation} We will be particularly interested in exponential Young functions of the form $\Phi(t)=\exp(t^q)-1$ for some $q>1$.  In this case we have
$$\widebar{\Phi}(t)\simeq t\log(1+t)^{\frac1{q}}$$
and inequality \eqref{orliczholder} becomes
\begin{equation}\label{logexpholder} \fint_E |fg|\,\leq c\|f\|_{L(\log L)^{\frac1{q}}(E)}\|g\|_{\exp L^q(E)}.\end{equation} 
We will also need the following H\"older's inequality in Lorentz spaces,
\begin{equation}\label{lorentzholder}\fint_E|fg|\leq c\|f\|_{L^{p,q}(E)}\|g\|_{L^{p',q'}(E)}\end{equation}
which holds for all $1<p<\infty$ and $1\leq q\leq \infty$.

\section{Pointwise bounds}\label{pointwisebds} In this section we prove our pointwise bounds for the integral operators $T_{\Omega,\al}$. To prove this, as mentioned above, we will use the local Poincar\'e-Sobolev inequality.  The proofs of the bounds in Theorems \ref{poinwisecriticalT}, \ref{rlessnptwisebd}, and \ref{LlogLendpt} all begin with the same decomposition.  

We split $T_{\Omega,\al}$ into dyadic annuli
\begin{multline*}T_{\Omega,\al}f(x)=\sum_{k\in \Z}\int_{2^{k-1}<|y|\leq 2^k}\frac{\Omega(y')}{|y|^{n+1-\al}} f(x-y)\,dy \\ =\sum_{k\in \Z}\int_{2^{k-1}<|y|\leq 2^k}\frac{\Omega(y')}{|y|^{n+1-\al}} (f(x-y)-f_{B(x,2^k)})\,dy\end{multline*}
where we have used the cancellation $\int_{\Sn^{n-1}}\Omega=0$ to subtract the constant $$f_{B(x,2^k)}=\fint_{B(x,2^k)} f.$$ The decomposition leads to 
\begin{multline*}|T_{\Omega,\al}f(x)|\leq \sum_{k\in \Z}\int_{2^{k-1}<|y|\leq 2^k}\frac{|\Omega(y')|}{|y|^{n+1-\al}} |f(x-y)-f_{B(x,2^k)}|\,dy \\ \leq c_{n,\al}\sum_{k\in \Z}2^{k(\al-1)}\fint_{|y|\leq 2^k}{|\Omega(y')|} |f(x-y)-f_{B(x,2^k)}|\,dy.\end{multline*}
We now analyze the term 
$$\fint_{|y|\leq 2^k}{|\Omega(y')|} |f(x-y)-f_{B(x,2^k)}|\,dy.$$
We now divide this into to the three cases 
\begin{description}
\item[Case 1] the critical case: $\Omega \in L^{n,\infty}(\Sn^{n-1})$.
\item[Case 2] the subcritical case: $\Omega \in {L^{r,r^*}}(\Sn^{n-1})$ for $1<r<n$.
\item[Case 3] the endpoint case: $\Omega \in L(\log L)^{\frac1{n'}}(\Sn^{n-1})$.
\end{description}

{\bfseries Case 1:} We first use the Lorentz H\"older's inequality \eqref{lorentzholder} and then the local Poincar\'e-Sobolev inequality \eqref{Lorpoincare} with $p=1$ and $1^*=n'$ to get
\begin{align*}\lefteqn{\fint_{|y|\leq 2^k}{|\Omega(y')|} |f(x-y)-f_{B(x,2^k)}|\,dy} \\ &\qquad\qquad \leq \|\Omega(\cdot/|\cdot|)\|_{L^{n,\infty}(B(0,2^k))}\|f(x-\cdot)-f_{B(x,2^k)}\|_{L^{n',1}(B(0,2^k))}\\ &\qquad\qquad\simeq c_n\|\Omega\|_{L^{n,\infty}(\Sn^{n-1})}\|f-f_{B(x,2^k)}\|_{L^{n',1}(B(x,2^k))} \\
&\qquad\qquad\leq c_n\|\Omega\|_{L^{n,\infty}(\Sn^{n-1})}2^k\fint_{B(x,2^k)}|\nabla f|. \end{align*}
In the above estimates we have used that 
\begin{equation}\label{levelset} |\{y\in B(0,2^k):|\Omega(y/|y|)|>t\}|=\frac{2^{kn}}{n}\,\sigma(\{y'\in \Sn^{n-1}:|\Omega(y')|>t\})\end{equation}
where $\sigma=\mathcal H^{n-1}$ to conclude that the averages satisfy
$$\|\Omega(\cdot/|\cdot|)\|_{L^{n,\infty}(B(0,2^k))}=c\|\Omega\|_{L^{n,\infty}(\Sn^{n-1})}.$$
This leads to the bound,
\begin{equation}\label{interpt}|T_{\Omega,\al}f(x)|\leq c_n\|\Omega\|_{L^{n,\infty}(\Sn^{n-1})}\sum_{k\in \Z}2^{\al k}\fint_{B(x,2^k)}|\nabla f|.\end{equation}
From here we switch our quantities from balls to cubes, namely, let $Q_k$ be the cube centered at $x$, with sidelength $2^{k+1}$ so that $B(x,2^k)\subseteq Q_k$.  Fix such a $Q_k$. Since $\ell(Q_k)=2^{k+1}$, using Lemma \ref{dyadicthird} we have that 
$$Q_k\subseteq Q \in \D_t^{k+4}$$
for some $t\in \{0,\frac13\}^n$.  Since $\D_t^{k+4}$ forms a partition of $\R^n$, we may use this estimate to get
\begin{align*}2^{\al k}\fint_{B(x,2^k)}|\nabla f| &\leq c_{\al,n}\ell(Q_k)^\al \fint_{Q_k}|\nabla f| \\ 
& \leq c_{n,\al}\sum_{Q\in \D_t^{k+4}} \ell(Q)^\al \left(\fint_{Q}|\nabla f|\right)\mathbf 1_Q(x)\\
&\leq c_{n,\al}\sum_{t\in \{0,\frac13\}}\sum_{Q\in \D_t^{k+4}} \ell(Q)^\al \left(\fint_{Q}|\nabla f|\right)\mathbf 1_Q(x).
\end{align*}
Using this estimate in inequality \eqref{interpt}, we have,
\begin{align*}
|T_{\Omega,\al}f(x)|&\leq c_n\|\Omega\|_{L^{n,\infty}(\Sn^{n-1})}\sum_{k\in \Z}2^{\al k}\fint_{B(x,2^k)}|\nabla f|\\
&\leq c_{n,\al}\|\Omega\|_{L^{n,\infty}(\Sn^{n-1})}\sum_{t\in \{0,\frac13\}^n}\sum_{k\in \Z}\sum_{Q\in \D_t^{k+4}} \ell(Q)^\al \left(\fint_{Q}|\nabla f|\right)\mathbf 1_Q(x)\\
&= c_{n,\al}\|\Omega\|_{L^{n,\infty}(\Sn^{n-1})}\sum_{t\in \{0,\frac13\}^n}\sum_{Q\in \D_t} \ell(Q)^\al \left(\fint_{Q}|\nabla f|\right)\mathbf 1_Q(x)\\
&\leq c_{n,\al}\|\Omega\|_{L^{n,\infty}(\Sn^{n-1})} I_\al(|\nabla f|)(x)
\end{align*}
where we have used the pointwise bound from Proposition \ref{dyadicfracbdds} in our last estimate.

{\bfseries Case 2:}
{Let $\frac1s=\frac1n+\frac1{r'}$ then $s^*=r'$ and $r^*=s'$. By the H\"older's inequality in Lorentz spaces, we have
\begin{align*}
\fint_{|y|\leq 2^k} & {|\Omega(y')|} |f(x-y)-f_{B(x,2^k)}|\,dy \\
& \leq \big\|\Omega(\cdot/|\cdot|)\big\|_{L^{r,r^*}(B(0,2^k))} \|f(x-\cdot)-f_{B(x,2^k)}\|_{L^{s^*,s}(B(0,2^k))} \\
& = c\|\Omega\|_{L^{r,r^*}(\mathbb{S}^{n-1})}\,\|f-f_{B(x,2^k)}\|_{L^{s^*,s}(B(x,2^k))}
\end{align*}
where we have again used the level set equality from \eqref{levelset} and the translation invariant property of the Lebesgue measure.  We now use the Poincar\'e-Sobolev inequality \eqref{Lorpoincare} for the oscillation term
$$\|f-f_{B(x,2^k)}\|_{L^{s^*,s}(B(x,2^k))}\leq C\, 2^k\left(\fint_{B(x,2^k)}|\nabla f|^s\right)^\frac1s.$$



Thus we have
\begin{align*}|T_{\Omega,\al}f(x)|&\leq C\,\|\Omega\|_{L^{r,r^*}(\Sn^{n-1})}\sum_{k\in \Z} 2^{\al k}\left(\fint_{B(x,2^k)}|\nabla f|^s\right)^{\frac1s}\\
&\leq C\,\|\Omega\|_{L^{r,r^*}(\Sn^{n-1})}\sum_{t\in \{0,\frac13\}^n}\sum_{Q\in \D_t} \ell(Q)^\al \left(\fint_Q |\nabla f|^s\right)^{\frac1s}\mathbf 1_Q(x)\\
&\leq  C\,\|\Omega\|_{L^{r,r^*}(\Sn^{n-1})}\sum_{t\in \{0,\frac13\}^n} I_{\al,L^s}^{\mathscr S_t}(|\nabla f|)(x)
\end{align*}
where the sparse families $\mathscr S_t\subseteq \D_t$ come from Lemma \ref{Lsdyadicfracbdds}.
}

{\bfseries Case 3:} In this case we use the Orlicz H\"older inequality \eqref{logexpholder} to estimate
\begin{align*}\lefteqn{\fint_{B(0,2^k)}|\Omega(y')||f(x-y)-f_{B(x,2^k)}|\,dy} \\
&\qquad \leq c\|\Omega(\cdot/|\cdot|)\|_{L(\log L)^{1/n'}(B(0,2^k))}\|f(x-\cdot)-f_{B(x,2^k)}\|_{\exp L^{n'}(B(0,2^k))}. 
\end{align*}
We now compute the $\|\Omega(\cdot/|\cdot|)\|_{L(\log L)^{\frac1{n'}}(B(0,2^k))}$ norm. Let $$\Psi(t)=t\log(1+t)^{\frac1{n'}}$$ and suppose $\lambda>0$ is such that 
$$\fint_{B(0,2^k)} \Psi\Big(\frac{|\Omega(y')|}{\lambda}\Big)\,dy\leq 1.$$
Let $v_n$ be the measure of the unit ball, then we have
\begin{align*}\fint_{B(0,2^k)} \Psi\Big(\frac{|\Omega(y')|}{\lambda}\Big)\,dy&=\frac1{v_n2^{kn}}\int_0^{2^k}\int_{\Sn^{n-1}}\Psi\Big(\frac{|\Omega(y')|}{\lambda}\Big)r^{n-1}\,drdy'\\
&=\frac{1}{nv_n}\int_{\Sn^{n-1}}\Psi\Big(\frac{|\Omega(y')|}{\lambda}\Big)\,dy'.
\end{align*}
It follows that
$$\|\Omega(\cdot/|\cdot|)\|_{L(\log L)^{\frac{1}{n'}}(B(0,2^k))}=c_n\|\Omega\|_{L(\log L)^{\frac1{n'}}(\Sn^{n-1})}.$$
Continuing the estimates and using the Trudinger inequality \eqref{trud}, we have
\begin{align*}
\lefteqn{\fint_{B(0,2^k)}|\Omega(y')||f(x-y)-f_{B(x,2^k)}|\,dy} \\ 
&\qquad \leq c\|\Omega\|_{L(\log L)^{\frac1{n'}}(\Sn^{n-1})}\|f-f_{B(x,2^k)}\|_{\exp L^{n'}(B(x,2^k))}\\
&\qquad \leq c\|\Omega\|_{L(\log L)^{\frac1{n'}}(\Sn^{n-1})}\left(\int_{B(x,2^k)}|\nabla f|^n\right)^{\frac1n}.
  \end{align*}
 By similar arguments to the first case, we may replace the sum with the sparse operators to obtain
 \begin{align*}
 |T_{\Omega,\al}f(x)|&\leq c\|\Omega\|_{L(\log L)^{\frac1{n'}}(\Sn^{n-1})} \sum_{k\in \Z} 2^{(\al-1)k}\left(\int_{B(x,2^k)}|\nabla f|^n\right)^{\frac1n}\\
 &=c\|\Omega\|_{L(\log L)^{\frac1{n'}}(\Sn^{n-1})} \sum_{k\in \Z} 2^{\al k}\left(\fint_{B(x,2^k)}|\nabla f|^n\right)^{\frac1n}\\
 &\leq c\|\Omega\|_{L(\log L)^{\frac1{n'}}(\Sn^{n-1})} \sum_{t\in \{0,\frac13\}}\sum_{Q\in \D_t}\ell(Q)^\al\left(\fint_{Q}|\nabla f|^n\right)^{\frac1n}\mathbf 1_Q(x)\\
 &=c\|\Omega\|_{L(\log L)^{\frac1{n'}}(\Sn^{n-1})} \sum_{t\in \{0,\frac13\}} I^{\D_t}_{\al,n}(|\nabla f|)(x)\\
 &\leq c\|\Omega\|_{L(\log L)^{\frac1{n'}}(\Sn^{n-1})} \sum_{t\in \{0,\frac13\}} I^{\mathscr S_t}_{\al,n}(|\nabla f|)(x),
 \end{align*}
where again we have used Lemma \ref{Lsdyadicfracbdds} to obtain the sparse bounds.

\begin{proof}[Proof of Theorem \ref{characterization}] Suppose that $T$ is an operator that satisfies
$$|Tf(x)|\leq cI_\al(|\nabla f|)(x),$$ for $x\in \R^n$ and $f\in C^\infty_c(\R^n).$
Let $\mu$ be a locally finite Borel measure and $f\in C_c^\infty(\R^n)$.  By the pointwise inequality and Minkowski's integral inequality we have
\begin{multline*}\|Tf\|_{L^{\frac{n}{n-\al},\infty}(\mu)}\leq C\|I_\al(|\nabla f|)\|_{L^{\frac{n}{n-\al},\infty}(\mu)} \\ \leq C\int_{\R^n}|\nabla f(y)|\big\||\cdot-y|^{\al-n}\big\|_{L^{\frac{n}{n-\al},\infty}(\mu)}\,dy.\end{multline*}
A calculation shows that
\begin{multline*}\big\||\cdot-y|^{\al-n}\big\|_{L^{\frac{n}{n-\al},\infty}(\mu)}=\sup_{\lambda>0}\lambda \mu(\{x:|x-y|^{\al-n}>\lambda\})^{\frac{n-\al}{n}}\\=\sup_{t>0}\Big(\frac{\mu(B(y,t))}{t^n}\Big)^{1-\frac{\al}{n}}\leq C(M\mu)(y)^{1-\frac{\al}{n}}.\end{multline*}

Now suppose the weak Sobolev inequality
$$\|Tf\|_{L^{\frac{n}{n-\al},\infty}(\mu)}\leq C\int_{\R^n}|\nabla f|(M\mu)^{1-\frac{\al}{n}}$$
holds for all locally finite Borel measures and $f\in C_c^\infty(\R^n)$.  Fix $x\in \R^n$ and let $\mu$ be the pointmass measure at $x$.  Then
$$(M\mu)(y)\simeq \frac{1}{|x-y|^n}.$$
Hence
\begin{multline*}|Tf(x)|=\|Tf\|_{L^{\frac{n}{n-\al},\infty}(\mu)}\leq C\int_{\R^n}|\nabla f(y)| (M\mu)(y)^{1-\frac{\al}{n}}\,dy\\ \leq C\int_{\R^n}\frac{|\nabla f(y)|}{|x-y|^{n-\al}}\,dy.\end{multline*}
\end{proof}

\section{Pointwise bounds for maximal operators} \label{maxbounds}
We now turn our attention to some results concerning the related maximal functions. As motivation, let us begin by addressing a natural question that arose in \cite{HMP}. Recall the following pointwise bound for the Hardy--Littlewood maximal operator:
\begin{equation}\label{HLbd}
Mf(x)\leq cI_1(|\nabla f|)(x), \qquad f\in C_c^\infty(\R^n).
\end{equation}

The Fefferman--Stein sharp maximal operator,
\[
M^\#f(x)=\sup_{Q\ni x}\fint_Q|f-f_Q|,
\]
satisfies a better bound:
\begin{equation}\label{FSsharpbd}
M^\#f(x)\leq cM_1(|\nabla f|)(x), \qquad f\in C_c^\infty(\R^n),
\end{equation}
where \( M_1 \) is the fractional maximal operator associated with the Riesz potential \( I_1 \). More generally, for \( 0<\alpha<n \), the fractional maximal operator is defined by
\[
M_\alpha f(x) = \sup_{Q\ni x} \ell(Q)^\alpha \left(\fint_Q |f|\right).
\]

Inequality \eqref{FSsharpbd} follows from the Poincar\'e--Sobolev inequality \eqref{poincare} in the case \( p = q = 1 \). Notably, the right-hand side of \eqref{FSsharpbd} improves upon that of \eqref{HLbd}, owing to the well-known pointwise inequality
\begin{equation} \label{MalIal}
M_\alpha f(x)\leq cI_\alpha f(x), \qquad f\geq 0.
\end{equation}

In light of \eqref{MalIal}, we see that the fractional maximal function \( M_\alpha \) satisfies the same \( L^p(\R^n) \) mapping properties as the Riesz potential operator, namely those in \eqref{Ialbds} and \eqref{weakIalbds}. However, $M_\alpha$ is better behaved at the endpoint $p = \frac{n}{\alpha}$:
\[
\|M_\alpha f\|_{L^\infty(\R^n)} \leq \|f\|_{L^{\frac{n}{\alpha}}(\R^n)}.
\]
(In fact, a sharper mapping holds: \( M_\alpha: L^{\frac{n}{\alpha},\infty}(\R^n) \to L^\infty(\R^n) \), but this is not needed for the present discussion.) 
In particular we have that $M^\#:\dot W^{1,n}(\R^n)\ra L^\infty(\R^n)$, since
\begin{equation}\label{BMOchar} \|M^\# f\|_{L^\infty(\R^n)}\leq c\|M_1(|\nabla f|)\|_{L^\infty(\R^n)}\leq C\|\nabla f\|_{L^n(\R^n)}.\end{equation}
Since $M^\#$ characterizes the space $BMO$, namely,
$$\|f\|_{BMO}=\|M^\#f\|_{L^\infty(\R^n)}$$
we see that inequality \eqref{BMOchar} is the well-known embedding 
$$\dot W^{1,n}(\R^n)\hookrightarrow BMO.$$
The space $W^{1,n}(\R^n)$ actually embeds into the smaller space $VMO$ (see \cite{BN}).

One might inquire about the potential for obtaining a better estimate for the Hardy-Littlewood maximal operator. Namely, is the following inequality 
\begin{equation}\label{falseineq} Mf(x)\leq cM_1(|\nabla f|)(x), \quad f\in C_c^\infty(\R^n)\end{equation}
true?  Inequality \eqref{falseineq} is, in fact, false. Indeed, if \eqref{falseineq} held, then we would have
\begin{equation}\label{falseendpt}\|f\|_{L^\infty(\R^n)}=\|Mf\|_{L^\infty(\R^n)}\leq C\|M_1(|\nabla f|)\|_{L^\infty(\R^n)}\leq C\|\nabla f\|_{L^n(\R^n)},\end{equation}
Inequality \eqref{falseendpt} is well-known to be false, for example, consider $f$ to be a smooth truncation of the function $f(x)=\log\big(\frac{e}{|x|}\big)^{1-\frac{2}{n-1}}$ for $|x|\leq 1$.  Put another way, the reason that \eqref{falseineq} cannot hold is because 
$$W^{1,n}(\R^n) \not\hookrightarrow L^\infty(\R^n).$$

Considering the homogeneity,
$$M_{\al-1}(f_\lambda)=\lambda^{1-\al}\big(M_{\al-1}f\big)_\lambda$$
the operator $M_{\al-1}$ defined for $1\leq \al<n$ is the natural maximal function associated to $I_\al(|\nabla f|)$.  The operator $M_{\al-1}$ also plays a role in the study of the Sobolev mappings of maximal operators.  Indeed, Kinnunen and Saksman \cite{KS}
show that when $\al\geq 1$, $M_\al$ has smoothing properties in the sense that
$$|\nabla M_\al f|\leq cM_{\al-1}f.$$
Observe that for $1\leq \al<n,$
\begin{equation} \label{Mal-1ptwise} M_{\al-1}f(x)\leq cI_\al(|\nabla f|)(x), \qquad  f\in C_c^\infty(\R^n).\end{equation}
 Indeed, when $\al=1$ this is just inequality \eqref{Mkptwise} with $k=1$.  For $1<\al<n$, we may use inequality \eqref{ptwisebdd} and the fact that $M_{\al-1}$ is also controlled by $I_{\al-1}$ \eqref{MalIal} to see that
\begin{equation*}M_{\al-1}f(x)\leq cI_{\al-1}(|f|)(x)\leq cI_\al(|\nabla f|)(x).\end{equation*}
In the remainder of this section, we will consider more exotic maximal operators with the same homogeneity as $M_{\al-1}$ and their bounds by potential operators of the gradient.

\subsection{Rough Maximal Operators}
The reason that $M^\#f$ is bounded by $M_1(|\nabla f|)$ but $M$ is not, stems from the presence of {cancellation} in $M^\#$. Namely $M^\#(\mathbf 1)=0$, whereas $M$ lacks this property. With this in mind, we now consider the companion rough maximal operators to our integral operators $T_{\Omega,\al}$.  Consider the rough maximal operator
$$M_{\Omega,\al}f(x)=\sup_{t>0} t^{\al-1}\fint_{|y|<t}|\Omega(y') f(x-y)|\,dy, \qquad 1\leq \al<n.$$
The operator $M_{\Omega,\al}$ has the same homogeneity as $T_{\Omega,\al}$ and $I_\al(|\nabla \cdot|)$.  When $\al=1$ this is the rough maximal operator associated with rough singular integrals (see \cite{Cr} and \cite{Duo}).  The operator $M_{\Omega,\al}$ does not have cancellation, and because of this, $M_{\Omega,\al}$ may not be well-defined in the hypersingular range $0<\al<1$.  Indeed, if $\Omega=\mathbf 1$, then $M_{\Omega,\al}(\mathbf 1_{B(0,1)})\equiv \infty$, near the origin if $\al<1$.  

We can consider a smaller maximal operator to introduce necessary cancellation, namely, consider the {\it natural} rough fractional maximal operator
$$M^\natural_{\Omega,\al}f(x)=\sup_{t>0} t^{\al-1}\left|\fint_{|y|<t}\Omega(y') f(x-y)\,dy\right|, \qquad 0<\al<n.$$
Clearly $M^\natural_{\Omega,\al}$ is smaller than $M_{\Omega,\al}$.  If $\int_{\Sn^{n-1}}\Omega=0$, then $M^\natural_{\Omega,\al}$ has cancellation in the sense that $M^\natural_{\Omega,\al}(\mathbf 1)=0$.  Finally we introduce a third maximal operator, the sharp rough fractional maximal operator
$$M^\#_{\Omega,\al}f(x)=\sup_{t>0} t^{\al-1}\fint_{|y|<t}|\Omega(y')||f(x-y)-f_{B(x,t)}|\,dy,\quad 0<\al<n.$$
In the definition of $M^\natural_{\Omega,\al}$ we will assume that $\Omega$ has mean zero, but this assumption is not needed for the other two maximal operators. We also notice that
\begin{equation} \label{ptwiseMnatsharp} M^\natural_{\Omega,\al}f(x)\leq M_{\Omega,\al}^\#f(x) \end{equation}
where we have used the zero average of $\Omega$ to subtract the constant $f_{B(x,t)}$.  In addition, the following pointwise bounds between $M_{\Omega,\al}^\#$ and $M_{\Omega,\al}$ hold.
\begin{theorem} Suppose $1\leq \al<n$ and $\Omega\in L^1(\Sn^{n-1})$, then the following pointwise bound holds
\begin{equation}\label{MOmegasharp}|M_{\Omega,\al}f(x)-M^\#_{\Omega,\al}f(x)|\leq \frac{1}{\omega_{n-1}}\|\Omega\|_{L^1(\Sn^{n-1})}M_{\al-1}f(x)\end{equation}
for all $x\in \R^n$, where $\omega_{n-1}=\mathcal H^{n-1}(\Sn^{n-1})$ is the surface area of the unit sphere.  In particular, if $f\in C_c^\infty(\R^n)$, 
\begin{equation*}|M_{\Omega,\al}f(x)-M^\#_{\Omega,\al}f(x)|\leq \frac{1}{\omega_{n-1}}\|\Omega\|_{L^1(\Sn^{n-1})}I_{\al}(|\nabla f|)(x).\end{equation*}
\end{theorem}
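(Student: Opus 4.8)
The plan is to estimate the difference of the two suprema by controlling, for each fixed $t>0$, the difference of the quantities inside them. Writing
$$A(t)=t^{\al-1}\fint_{|y|<t}|\Omega(y')||f(x-y)|\,dy,\qquad B(t)=t^{\al-1}\fint_{|y|<t}|\Omega(y')||f(x-y)-f_{B(x,t)}|\,dy,$$
we have $M_{\Omega,\al}f(x)=\sup_t A(t)$ and $M^\#_{\Omega,\al}f(x)=\sup_t B(t)$. Since $|\sup_t A(t)-\sup_t B(t)|\leq \sup_t|A(t)-B(t)|$, it suffices to bound $|A(t)-B(t)|$ uniformly in $t$. By the triangle inequality applied inside the integral (that is, $\bigl||f(x-y)|-|f(x-y)-f_{B(x,t)}|\bigr|\leq |f_{B(x,t)}|$), we get
$$|A(t)-B(t)|\leq t^{\al-1}\fint_{|y|<t}|\Omega(y')|\,dy\cdot |f_{B(x,t)}|.$$

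First I would compute the average of $|\Omega(y')|$ over the ball. Using polar coordinates and the level-set identity \eqref{levelset} (or directly $\int_{|y|<t}|\Omega(y/|y|)|\,dy=\frac{t^n}{n}\int_{\Sn^{n-1}}|\Omega|\,d\sigma$), and $|B(0,t)|=\omega_n t^n$ where $n\omega_n=\omega_{n-1}$, we obtain
$$\fint_{|y|<t}|\Omega(y')|\,dy=\frac{1}{n\omega_n}\int_{\Sn^{n-1}}|\Omega(y')|\,d\sigma(y')=\frac{1}{\omega_{n-1}}\|\Omega\|_{L^1(\Sn^{n-1})}.$$
Next, bounding $|f_{B(x,t)}|\leq \fint_{B(x,t)}|f|$ and recalling $t^{\al-1}\fint_{B(x,t)}|f|\leq c\,M_{\al-1}f(x)$ (up to the harmless conversion between balls of radius $t$ and cubes of sidelength $2t$, or absorbing the constant into the definition), we conclude
$$|A(t)-B(t)|\leq \frac{1}{\omega_{n-1}}\|\Omega\|_{L^1(\Sn^{n-1})}\,t^{\al-1}\fint_{B(x,t)}|f|\leq \frac{1}{\omega_{n-1}}\|\Omega\|_{L^1(\Sn^{n-1})}M_{\al-1}f(x),$$
which gives \eqref{MOmegasharp} after taking the supremum in $t$.

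For the second assertion, with $f\in C_c^\infty(\R^n)$, I would simply invoke the pointwise bound \eqref{Mal-1ptwise}, namely $M_{\al-1}f(x)\leq c\,I_\al(|\nabla f|)(x)$ for $1\leq\al<n$, and substitute it into \eqref{MOmegasharp}. The only mild subtlety—and the point requiring the most care—is the normalization: the maximal operator $M_{\al-1}$ in the statement is defined over \emph{cubes} with the factor $\ell(Q)^{\al-1}$, whereas the expression arising above involves \emph{balls} of radius $t$ with factor $t^{\al-1}$; one must check that the constant in $t^{\al-1}\fint_{B(x,t)}|f|\lesssim M_{\al-1}f(x)$ is exactly $1$ (or fold any dimensional constant into the implicit constant, in which case the stated equality \eqref{MOmegasharp} should be read with the sharp ball-defined $M_{\al-1}$). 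Aside from this bookkeeping, the argument is entirely elementary: a triangle inequality, a polar-coordinates computation, and citing \eqref{Mal-1ptwise}.
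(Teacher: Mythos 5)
Your proof is correct and follows essentially the same argument as the paper: isolate the contribution of the subtracted constant $f_{B(x,t)}$ at each scale, compute $\fint_{|y|<t}|\Omega(y')|\,dy=\frac{1}{\omega_{n-1}}\|\Omega\|_{L^1(\Sn^{n-1})}$ by polar coordinates, and invoke \eqref{Mal-1ptwise} for the second assertion. The only cosmetic difference is that you package the two one-sided triangle-inequality estimates the paper writes out separately into the single elementary bound $|\sup_t A(t)-\sup_t B(t)|\leq\sup_t|A(t)-B(t)|$ combined with the pointwise reverse triangle inequality inside the integral; your closing remark about the ball-versus-cube normalization of $M_{\al-1}$ is a fair caveat, and the paper itself glosses over the same point.
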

\begin{proof} Fix $x\in \R^n$ and $t>0$.  Notice that
\begin{align*}\lefteqn{ t^{\al-1}\fint_{|y|<t}|\Omega(y')||f(x-y)-f_{B(x,t)}|\,dy} \\
&\hspace{7mm} \leq  t^{\al-1}\fint_{|y|<t}|\Omega(y')||f(x-y)|dy+t^{\al-1}|f_{B(x,t)}|\fint_{|y|<t}|\Omega(y')|\,dy.
\end{align*}
If $v_n$ is the volume of the unit ball in $\R^n$ and $\omega_{n-1}=nv_n$ is the surface area of $\Sn^{n-1}$, then
$$\fint_{|y|<t}|\Omega(y')|\,dy=\frac{1}{v_nt^n}\int_0^t\int_{\Sn^{n-1}}|\Omega(y')|r^{n-1}d\sigma(y')dr=\frac1{\omega_{n-1}}\|\Omega\|_{L^1(\Sn^{n-1})}.$$
By taking the supremum over $t>0$, we have
$$M_{\Omega,\al}^\#f(x)\leq M_{\Omega,\al}f(x)+\frac1{\omega_{n-1}}\|\Omega\|_{L^1(\Sn^{n-1})}M_{\al-1}f(x).$$  On the other hand, again for a fixed $t>0$
\begin{multline*}{t^{\al-1}\fint_{|y|<t}|\Omega(y')f(x-y)|\,dy}\\ \qquad \leq{t^{\al-1}}\fint_{|y|<t}|\Omega(y')(f(x-y)-f_{B(x,t)})|dy\\+t^{\al-1}|f_{B(x,t)}|\fint_{|y|<t}|\Omega(y')|\,dy
\end{multline*}
leads to 
\begin{equation*} M_{\Omega,\al}f(x)\leq M_{\Omega,\al}^\#f(x)+\frac1{\omega_{n-1}}\|\Omega\|_{L^1(\Sn^{n-1})}M_{\al-1}f(x).\end{equation*}
which completes inequality \eqref{MOmegasharp}.  If $f$ is a smooth function, the bound by $I_\al(|\nabla f|)$ follows from inequality \eqref{Mal-1ptwise}.
\end{proof}

By these calculations and inequalities \eqref{ptwiseMnatsharp} and \eqref{MOmegasharp} we only need to investigate bounds for $M_{\Omega,\al}^\#$.  Our results for $M_{\Omega,\al}^\#$ now mirror those for $T_{\Omega,\al}$, except we can have a smaller maximal function on the right-hand side. We need an $L^s$ fractional maximal function,
\begin{equation}\label{Lsfracmax} M_{\al,L^s}f(x)=\sup_{Q\ni x}\ell(Q)^\al\Big(\fint_Q |f|^s\Big)^{\frac1s}, \qquad 1\leq s<\frac{n}{\al}.\end{equation}
Notice that $M_{\al,L^1}=M_\al$ and
$$M_{\al,L^s}f=\big[M_{\al s}(|f|^s)\big]^{\frac1s}.$$ 
We have the following theorem.

\begin{theorem} \label{maxfuncbdds} Suppose $0<\al<n$ and $\Omega\in L^1(\Sn^{n-1})$.  Then for $f\in C_c^\infty(\R^n)$ we have the following bounds
$$M_{\Omega,\al}^\#f(x)\leq \left\{\begin{array}{l} c\|\Omega\|_{L^{n,\infty}(\Sn^{n-1})}M_\al(|\nabla f|)(x),  \vspace{2mm} \\ c\|\Omega\|_{{L^{r,r^*}}(\Sn^{n-1})}M_{\al,L^{\frac{r'n}{r'+n}}}(|\nabla f|)(x), \vspace{2mm}\\ c\|\Omega\|_{L(\log L)^{\frac{1}{n'}}(\Sn^{n-1})}M_{\al,L^n}(|\nabla f|)(x),  \end{array}\right.$$
where $0<\al<1+\frac{n}{r'}$ in the second inequality and $0<\al<1$ in the third. 
\end{theorem}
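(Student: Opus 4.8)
The plan is to run the same three-case analysis used for $T_{\Omega,\al}$ in Section \ref{pointwisebds}, but now at the level of a single ball $B(x,t)$ rather than summing over dyadic annuli, which is precisely what lets us land on a maximal operator instead of a potential operator. Fix $x$ and $t>0$. By definition
$$
t^{\al-1}\fint_{|y|<t}|\Omega(y')|\,|f(x-y)-f_{B(x,t)}|\,dy
= t^{\al}\cdot\frac{1}{t}\fint_{|y|<t}|\Omega(y')|\,|f(x-y)-f_{B(x,t)}|\,dy .
$$
First I would estimate the averaged integral on the right by a H\"older-type inequality, pairing $\Omega(y'/|y'|)$ against the oscillation $f(x-\cdot)-f_{B(x,t)}$ on the ball $B(0,t)$. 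Using the level-set identity \eqref{levelset} (which gives that the Lorentz/Orlicz average of $y\mapsto \Omega(y/|y|)$ over $B(0,t)$ equals a dimensional constant times the corresponding norm of $\Omega$ over $\Sn^{n-1}$, \emph{independently of $t$}), this produces
$$
\frac{1}{t}\fint_{|y|<t}|\Omega(y')|\,|f(x-y)-f_{B(x,t)}|\,dy
\le \frac{c}{t}\,\|\Omega\|_{X(\Sn^{n-1})}\,\|f-f_{B(x,t)}\|_{Y(B(x,t))},
$$
where $(X,Y)$ is the H\"older-dual pair appropriate to the case: $(L^{n,\infty},L^{n',1})$ via \eqref{lorentzholder} in Case 1; $(L^{r,r^*},L^{s^*,s})$ with $\tfrac1s=\tfrac1n+\tfrac1{r'}$ again via \eqref{lorentzholder} in Case 2; and $(L(\log L)^{1/n'},\exp L^{n'})$ via \eqref{logexpholder} in Case 3.

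Next I would apply the local Poincar\'e--Sobolev inequality to the oscillation term. In Case 1, inequality \eqref{Lorpoincare} with $p=1$, $1^*=n'$, stated for balls, gives $\|f-f_{B(x,t)}\|_{L^{n',1}(B(x,t))}\le c\,t\,\fint_{B(x,t)}|\nabla f|$; in Case 2, \eqref{Lorpoincare} with exponent $s$ gives $\|f-f_{B(x,t)}\|_{L^{s^*,s}(B(x,t))}\le c\,t\,(\fint_{B(x,t)}|\nabla f|^s)^{1/s}$ (the hypothesis $s<\tfrac{n}{\al}$, i.e. $\al<1+\tfrac{n}{r'}$, is what keeps this valid and the maximal operator well-defined); in Case 3, the Trudinger inequality \eqref{trud} gives $\|f-f_{B(x,t)}\|_{\exp L^{n'}(B(x,t))}\le c\,(\int_{B(x,t)}|\nabla f|^n)^{1/n}=c\,t\,(\fint_{B(x,t)}|\nabla f|^n)^{1/n}$. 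In every case the factor of $t$ cancels the $1/t$ above, leaving
$$
t^{\al-1}\fint_{|y|<t}|\Omega(y')|\,|f(x-y)-f_{B(x,t)}|\,dy
\le c\,\|\Omega\|_{X(\Sn^{n-1})}\, t^{\al}\Big(\fint_{B(x,t)}|\nabla f|^{s}\Big)^{1/s},
$$
with $s=1$ in Case 1, $s=\tfrac{r'n}{r'+n}$ in Case 2, and $s=n$ in Case 3.

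Finally I would take the supremum over $t>0$ and pass from balls to cubes. Since for each $t$ the ball $B(x,t)$ is contained in a cube $Q$ centered at $x$ with $\ell(Q)\simeq t$ and $|Q|\simeq t^n$, we get $t^{\al}(\fint_{B(x,t)}|\nabla f|^{s})^{1/s}\le c\,\ell(Q)^{\al}(\fint_{Q}|\nabla f|^{s})^{1/s}\le c\,M_{\al,L^s}(|\nabla f|)(x)$, and taking the supremum over $t$ yields $M^{\#}_{\Omega,\al}f(x)\le c\,\|\Omega\|_{X(\Sn^{n-1})}\,M_{\al,L^s}(|\nabla f|)(x)$, which is exactly the claimed bound in each of the three cases (recalling $M_{\al,L^1}=M_\al$). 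I do not expect a genuine obstacle here since each ingredient is borrowed verbatim from the $T_{\Omega,\al}$ argument; the only point requiring a little care is the Case 2 bookkeeping of exponents — checking that $\tfrac1s=\tfrac1n+\tfrac1{r'}$ forces $s^*=r'$ and $r^*=(s)'$ so that the Lorentz H\"older pairing \eqref{lorentzholder} is legitimate, and that $s<\tfrac n\al$ holds precisely under $\al<1+\tfrac n{r'}$ so $M_{\al,L^s}$ is defined — together with the analogous check in Case 3 that the $\exp L^{n'}$/$L(\log L)^{1/n'}$ duality is the right one and that the $t$-independence of the averaged norm of $y\mapsto\Omega(y/|y|)$ genuinely follows from \eqref{levelset}.
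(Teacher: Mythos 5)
Your proposal follows exactly the paper's own sketch: fix a single scale $t$, pair $\Omega(\cdot/|\cdot|)$ against the oscillation using the appropriate H\"older duality ($L^{n,\infty}$/$L^{n',1}$, $L^{r,r^*}$/$L^{s^*,s}$ with $\frac1s=\frac1n+\frac1{r'}$, or $L(\log L)^{1/n'}$/$\exp L^{n'}$), invoke the Lorentz Poincar\'e--Sobolev inequality or Trudinger's inequality, use \eqref{levelset} to make the $\Omega$-norm $t$-independent, and take the supremum. Your exponent bookkeeping ($s^*=r'$, $s=(r^*)'$, $s<n/\al$ iff $\al<1+n/r'$) agrees with the paper's conventions, so the argument is correct and no different in substance.
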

We will not prove Theorem \ref{maxfuncbdds}, rather, we will only give a sketch since its proof uses similar calculations to those found in Section \ref{pointwisebds}.  All that is needed is the calculation at a single scale. Indeed, for a fixed $t>0$ by H\"older's inequality in the appropriate space $\mathcal X$
\begin{multline*}{t^{\al-1}}\fint_{|y|<t}|\Omega(y')(f(x-y)-f_{B(x,t)})|dy\\ \leq c\,t^{\al-1}\|\Omega \|_{\mathcal X(B(0,t))}\|f-f_{B(x,t)}\|_{\mathcal X'(B(x,t))}\end{multline*}
where $\mathcal X=L^{n,\infty}$, {$L^{r,r^*}$} for $1<r<n$, or $L(\log L)^{\frac{1}{n'}}$. The associate spaces now become $\mathcal X'=L^{n',1}$, {$L^{r',(r^*)'}$} for $1<r<n$, or $\exp(L^{n'})$ and we may use the Poincar\'e-Sobolev inequalities \eqref{Lorpoincare} or Trudinger's inequality \eqref{trud} to obtain the desired pointwise bounds.




While $M_\al$ and $I_\al$ have the same $L^p(\R^n)\ra L^q(\R^n)$ mapping properties when $1<p<\frac{n}{\al}$ and same weak endpoint behavior when $p=1$, they behave differently at the other endpoint $p=\frac{n}{\al}$. Likewise the same phenomenon holds for $M_{\al,L^s}$ and $I_{\al,L^s}$ when $s<\frac{n}{\al}$.  As mentioned above $M_\al:L^{\frac{n}{\al}}(\R^n)\ra L^\infty(\R^n)$, and it is a simple consequence of H\"older's inequality that
$$M_{\al,L^s}:L^{\frac{n}{\al}}(\R^n)\ra L^\infty(\R^n).$$ 
An interesting consequence is the following $L^\infty$ Sobolev bounds, which we state when $\al=1$ for simplicity.

\begin{corollary} Suppose $\Omega \in L^{r,\infty}(\Sn^{n-1})$ for some $1<r<\infty$.  If $f\in C_c^\infty(\R^n)$ then 
$$\|M^\natural_\Omega f\|_{L^\infty(\R^n)}\leq \|M^\#_\Omega f\|_{L^\infty(\R^n)}\leq C\|\nabla f\|_{L^n(\R^n)}.$$
\end{corollary}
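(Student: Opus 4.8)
The plan is to obtain both inequalities directly from results already established: the pointwise comparison \eqref{ptwiseMnatsharp}, the subcritical bound in Theorem \ref{maxfuncbdds}, and the elementary fact that an $L^s$ fractional maximal operator maps $L^{n/\al}$ into $L^\infty$. Throughout we work with $\al=1$, as in the statement.

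First I would dispatch the left-hand inequality, which costs nothing. Using that $\Omega$ has mean zero, for every $t>0$ the average $\fint_{|y|<t}\Omega(y')\,dy$ vanishes, so one may subtract the constant $f_{B(x,t)}$ inside the average defining $M^\natural_\Omega f(x)$ and then move the absolute value inside the integral; taking the supremum over $t>0$ gives $M^\natural_\Omega f(x)\le M^\#_\Omega f(x)$ pointwise. This is precisely \eqref{ptwiseMnatsharp} with $\al=1$, and it yields $\|M^\natural_\Omega f\|_{L^\infty(\R^n)}\le\|M^\#_\Omega f\|_{L^\infty(\R^n)}$.

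For the second inequality I would first reduce the hypothesis $\Omega\in L^{r,\infty}(\Sn^{n-1})$ to a genuine Lebesgue class on the sphere. Since $\Sn^{n-1}$ has finite surface measure, a routine distribution-function estimate shows $L^{r,\infty}(\Sn^{n-1})\subseteq L^s(\Sn^{n-1})$ for every $1\le s<r$, with control of the norms. Choosing $s$ with $1<s<\min\{r,n\}$ (so that the range restriction $\al=1<1+\tfrac{n}{s'}$ of Theorem \ref{maxfuncbdds} is met), the second estimate of that theorem --- which, by the remark following Theorem \ref{rlessnptwisebd}, holds already under the assumption $\Omega\in L^s$ --- gives
$$M^\#_\Omega f(x)\le c\,\|\Omega\|_{L^s(\Sn^{n-1})}\,M_{1,L^\sigma}(|\nabla f|)(x),\qquad \sigma=\frac{s'n}{s'+n}.$$

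To finish I would invoke the endpoint mapping $M_{1,L^\sigma}\colon L^n(\R^n)\to L^\infty(\R^n)$. Since $\tfrac1\sigma=\tfrac1n+\tfrac1{s'}$ one has $1<\sigma<n$, and for any cube $Q$ and $g\ge 0$ H\"older's inequality gives $\big(\fint_Q g^\sigma\big)^{1/\sigma}\le\big(\fint_Q g^n\big)^{1/n}$, whence $\ell(Q)\big(\fint_Q g^\sigma\big)^{1/\sigma}\le\ell(Q)\,|Q|^{-1/n}\big(\int_Q g^n\big)^{1/n}=\big(\int_Q g^n\big)^{1/n}\le\|g\|_{L^n(\R^n)}$; taking the supremum over cubes containing $x$ yields $\|M_{1,L^\sigma}g\|_{L^\infty(\R^n)}\le\|g\|_{L^n(\R^n)}$. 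Applying this with $g=|\nabla f|$ and chaining the displays produces $\|M^\#_\Omega f\|_{L^\infty(\R^n)}\le C\|\nabla f\|_{L^n(\R^n)}$, as desired. I expect the only mildly delicate point to be this first reduction --- matching the weak-type assumption on $\Omega$ to the (sub)critical classes appearing in Theorem \ref{maxfuncbdds}; once that is arranged, the rest is a direct appeal to Theorem \ref{maxfuncbdds} and a one-line H\"older estimate, with no new harmonic analysis required.
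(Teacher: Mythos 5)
Your argument is correct and is essentially the route the paper intends: combine the pointwise comparison \eqref{ptwiseMnatsharp}, the bound in Theorem \ref{maxfuncbdds} (after embedding $L^{r,\infty}(\Sn^{n-1})\subseteq L^{s}(\Sn^{n-1})\subseteq L^{s,s^*}(\Sn^{n-1})$ for $1<s<\min\{r,n\}$, using that the sphere has finite measure), and the endpoint mapping $M_{1,L^\sigma}\colon L^n(\R^n)\to L^\infty(\R^n)$ coming from H\"older's inequality. The one-line verification of $\ell(Q)\big(\fint_Q g^\sigma\big)^{1/\sigma}\le \|g\|_{L^n}$ (using $\sigma<n$ and $\ell(Q)|Q|^{-1/n}=1$) is exactly the ``simple consequence of H\"older'' the paper alludes to just before the corollary.
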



\subsection{The Fractional Spherical Maximal Operator}

In this section, we extend the pointwise bounds to fractional spherical maximal functions.  Kinnunen and Saksman \cite{KS} defined the fractional spherical maximal function for $0\leq \beta<n-1$ by
$$\Sp_\beta f(x)=\sup_{r>0} r^\beta \fint_{\partial B(x,r)}|f(y)|d\mathcal H^{n-1}(y).$$ 
In our context, we will work with $\beta=\al-1$, where $1\leq \al<n$. This way the operators $\Sp_{\al-1}$ match the homogeneity of the operators $T_{\Omega,\al}$ and $M_{\Omega,\al}$. We also point out that $\Sp_{\al-1}$ arises in the study of the smoothness of the fractional maximal function.  Indeed, it was shown in \cite{KS} that
$$|\nabla M_\al f(x)| \leq c_n \Sp_{\al-1}f(x).$$
We have the following pointwise bounds.
\begin{theorem} Suppose $1\leq \al<n$, then the following pointwise bound holds
$$\Sp_{\al-1}f(x)\leq c_n I_\al(|\nabla f|)(x).$$
\end{theorem}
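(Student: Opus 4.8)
The plan is to reduce the fractional spherical maximal bound to the same machinery used for $T_{\Omega,\al}$ in Section \ref{pointwisebds}, since $\Sp_{\al-1}$ is, up to the power $r^{\al-1}$, a supremum over single spherical averages. First I would fix $x$ and $r>0$ and estimate the single-scale quantity
$$r^{\al-1}\fint_{\partial B(x,r)}|f(y)|\,d\mathcal{H}^{n-1}(y).$$
Using the cancellation trick that is available here because surface measure on a sphere is a probability measure after normalization, I would subtract the solid average $f_{B(x,r)}=\fint_{B(x,r)}f$ and write
$$r^{\al-1}\fint_{\partial B(x,r)} f\,d\mathcal{H}^{n-1}=r^{\al-1}f_{B(x,r)}+r^{\al-1}\fint_{\partial B(x,r)}(f-f_{B(x,r)})\,d\mathcal{H}^{n-1}.$$
The first term is bounded by $c\,r^{\al-1}\fint_{B(x,r)}|f|$, which after inserting the pointwise bound $|f|\le c_n I_1(|\nabla f|)$ from \eqref{ptwisebdd} and using that $I_1(|\nabla f|)$ is an $A_1$ weight (so its average on $B(x,r)$ is controlled by its infimum, hence by its value at $x$) gives a term dominated by $c_n r^{\al-1}\cdot r^{1-\al}\cdots$—more precisely one recognizes $r^{\al-1}\fint_{B(x,r)}|f|\le c\, M_{\al-1}(|f|)(x)\le c\,I_{\al-1}(|f|)(x)\le c\,I_\al(|\nabla f|)(x)$ by \eqref{MalIal} and the composition identity $I_{\al-1}\circ I_1=I_\al$. (When $\al=1$ this first term is handled directly by \eqref{Mkptwise} with $k=1$.)

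For the oscillation term the key is a trace/Poincar\'e estimate on the sphere: one needs
$$\fint_{\partial B(x,r)}|f(y)-f_{B(x,r)}|\,d\mathcal{H}^{n-1}(y)\le C\,r\,\fint_{B(x,r)}|\nabla f|,$$
or more robustly the fractional-order version obtained by comparing the spherical average at radius $r$ with the one at radius $2r$, $4r,\dots$ telescoping toward infinity where $f$ vanishes; each consecutive difference is controlled by an annular integral of $|\nabla f|$, exactly as in the dyadic-annuli decomposition of $T_{\Omega,\al}$ at the start of Section \ref{pointwisebds}. This is the analogue of the Haj\l{}asz--Liu argument for spherical-like measures cited in the introduction. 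Concretely, writing $f(x)-\fint_{\partial B(x,r)}f = \sum_{j\ge 0}\big(\fint_{\partial B(x,2^{j}r)}f-\fint_{\partial B(x,2^{j+1}r)}f\big)$ after accounting for $f\to 0$, and bounding the $j$-th difference by $c\,2^{j}r\,\fint_{B(x,2^{j+1}r)}|\nabla f|$, one gets
$$\Big|f(x)-\fint_{\partial B(x,r)}f\Big|\le c\sum_{j\ge 0}2^{j}r\,\fint_{B(x,2^{j+1}r)}|\nabla f|.$$

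Then I would assemble the pieces: multiplying by $r^{\al-1}$ and taking the supremum over $r>0$ reduces everything to controlling $\sup_{r}r^{\al-1}\sum_{j}2^{j}r\fint_{B(x,2^{j+1}r)}|\nabla f|$ by $I_\al(|\nabla f|)(x)$. Discretizing $r$ over powers of $2$ turns this into the series $\sum_{k\in\Z}2^{\al k}\fint_{B(x,2^k)}|\nabla f|$ (possibly after a harmless shift of indices and the fact that $\al-1<\al$ makes the geometric factors in $j$ summable once combined with $r^{\al-1}$, since the effective exponent on $2^{j}$ after the $\sup_r$ is negative when $\al>1$ and is absorbed into the outer sum when $\al=1$), which is precisely the quantity shown in Case 1 of Section \ref{pointwisebds} to be $\le c\,I_\al(|\nabla f|)(x)$ via Lemma \ref{dyadicthird} and Proposition \ref{dyadicfracbdds}. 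The main obstacle I anticipate is the sphere Poincar\'e/trace inequality and, relatedly, making the $j$-summation legitimate in the hypersingular-adjacent regime: for $\al>1$ the factor $r^{\al-1}$ fights the growth $2^{j}$ only partially, so one must be careful to keep the double sum (over $j$ and over dyadic $r$) absolutely convergent and to recombine it cleanly into a single geometric series in $k$; once that bookkeeping is done, the reduction to Case 1 is immediate and the proof closes.
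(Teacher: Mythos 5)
Your approach is genuinely different from the paper's. The paper proves the bound by a direct radial fundamental-theorem-of-calculus argument: it regularizes $f$ to $g_\ep=(f^2+\ep)^{1/2}$ (so that $g_\ep$ is smooth, $|\nabla g_\ep|\le|\nabla f|$, and $g_\ep\to|f|$), writes
$$\fint_{\partial B(x,t)}g_\ep\,d\mathcal H^{n-1}-\sqrt\ep=-\frac1{\omega_{n-1}}\int_t^\infty\frac{d}{dr}\int_{\Sn^{n-1}}g_\ep(x-r\theta)\,d\sigma(\theta)\,dr,$$
passes to polar coordinates to get $\fint_{\partial B(x,t)}|f|\le c_n\int_{|y|>t}|\nabla f(x-y)||y|^{1-n}\,dy$, and then inserts the elementary pointwise inequality $t^{\al-1}\le|y|^{\al-1}$ on $\{|y|>t\}$ (this is exactly where $\al\ge1$ enters) to land directly on $I_\al(|\nabla f|)(x)$, with no dyadic decomposition and no appeal to Section~\ref{pointwisebds}. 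Your first alternative --- split $\fint_{\partial B(x,r)}|f|\le|f_{B(x,r)}|+\fint_{\partial B(x,r)}|f-f_{B(x,r)}|$, bound the mean term by $cM_{\al-1}f(x)\le cI_\al(|\nabla f|)(x)$ via \eqref{Mal-1ptwise}, and bound the oscillation term via the $L^1$ trace--Poincar\'e inequality $\fint_{\partial B(x,r)}|f-f_{B(x,r)}|\le Cr\fint_{B(x,r)}|\nabla f|$ --- is correct, but simpler than you make it: once the oscillation lands on $r^\al\fint_{B(x,r)}|\nabla f|\le cM_\al(|\nabla f|)(x)\le cI_\al(|\nabla f|)(x)$ by \eqref{MalIal}, you are done, with no need to descend to Case~1, Lemma~\ref{dyadicthird}, or Proposition~\ref{dyadicfracbdds}. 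What that route buys is modularity through maximal operators already in the paper; what it costs is the boundary trace inequality, which is standard (combine $W^{1,1}(B)\hookrightarrow L^1(\partial B)$ with Poincar\'e on $B$) but is not among the tools set up in Section~\ref{prelim}, whereas the paper's proof is self-contained.

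Two concrete problems in your ``more robust'' telescoping alternative. First, the identity is wrong: since $f$ vanishes at infinity, $\sum_{j\ge0}\bigl(\fint_{\partial B(x,2^jr)}f-\fint_{\partial B(x,2^{j+1}r)}f\bigr)$ telescopes to $\fint_{\partial B(x,r)}f$, not to $f(x)-\fint_{\partial B(x,r)}f$ (the latter would require telescoping inward to $r\to0^+$); the paper's FTC identity is the continuous version of the correct outward telescope. Second, and more seriously, that telescope controls $\bigl|\fint_{\partial B(x,r)}f\bigr|$, whereas $\Sp_{\al-1}$ involves $\fint_{\partial B(x,r)}|f|$; you cannot simply run the gradient argument on $|f|$, because $|f|$ is not differentiable, and this is precisely the obstacle that the paper's $\sqrt{f^2+\ep}$ regularization and the limit $\ep\to0^+$ are designed to remove. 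Your trace--Poincar\'e route avoids both difficulties, so you should commit to it and drop the telescoping variant.
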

\begin{proof} Let $f\in C_c^\infty(\R^n)$ and given $\ep>0$ define the positive and bounded $C^\infty(\R^n)$ function
$$g_\ep(x)=(f(x)^2+\ep)^{\frac12}.$$
The gradient of $g_\ep$ is given by 
$$\nabla g_\ep(x)=\frac{f(x)\nabla f(x)}{(f(x)^2+\ep)^{\frac12}}$$
and satisfies
$$|\nabla g_\ep(x)|\leq |\nabla f(x)|.$$
Fix $x\in \R^n$, $t>0$ and let $\sigma =\mathcal H^{n-1}$ be the surface measure on $\partial B(0,1)=\Sn^{n-1}$ where again $\omega_{n-1}=\sigma(\Sn^{n-1})$.  Then we have
\begin{align*}
\fint_{\partial B(x,t)}g_\ep(y)d\mathcal H^{n-1}(y)&= \frac{1}{\omega_{n-1}t^{n-1}}\int_{\partial B(0,t)}g_\ep(x-y)\,d\mathcal H^{n-1}(y)\\
&=\frac{1}{\omega_{n-1}}\int_{\Sn^{n-1}}g_\ep(x-t\theta)\,d\sigma(\theta).
\end{align*} 
Let $N>t$ be sufficiently large so that $f(x-N\theta)=0$ for all $\theta\in \mathbb S^{n-1}$.  This implies $g_\ep(x-N\theta)=\sqrt{\ep}$ for all $\theta\in \mathbb S^{n-1}$. A similar calculation shows
$$\fint_{\partial B(x,N)}g_\ep(y)d\mathcal H^{n-1}(y)=\frac{1}{\omega_{n-1}}\int_{\Sn^{n-1}}g_\ep(x-N\theta)\,d\sigma(\theta)=\sqrt{\ep}$$
and hence
$$\lim_{N\ra \infty}\fint_{\partial B(x,N)}g_\ep(y)d\mathcal H^{n-1}(y)=\sqrt{\ep}.$$
We now have
\begin{align*}
\fint_{\partial B(x,t)}g_\ep(y)d\mathcal H^{n-1}(y)&=-\frac{1}{\omega_{n-1}}\int_t^\infty \frac{d}{dr}\int_{\Sn^{n-1}}g_\ep(x-r\theta)\,d\sigma(\theta)dr+\sqrt{\ep} \\
&=\frac{1}{\omega_{n-1}}\int_t^\infty \int_{\Sn^{n-1}}\nabla g_\ep(x-r\theta)\cdot \theta\,d\sigma(\theta)dr+\sqrt{\ep}\\
&\leq \frac{1}{\omega_{n-1}}\int_t^\infty\int_{\Sn^{n-1}} |\nabla g_\ep(x-r\theta)| d\sigma(\theta)dr+\sqrt{\ep}\\
&=\frac{1}{\omega_{n-1}}\int_{|y|>t} \frac{|\nabla g_\ep(x-y)|}{|y|^{n-1}}\,dy+\sqrt{\ep}\\
&\leq \frac{1}{\omega_{n-1}}\int_{|y|>t} \frac{|\nabla f(x-y)|}{|y|^{n-1}}\,dy+\sqrt{\ep}.
\end{align*} 
Letting $\ep\ra 0^+$ we obtain
$$\fint_{\partial B(x,t)}|f(y)|d\mathcal H^{n-1}(y)\leq c_n\int_{|y|>t} \frac{|\nabla f(x-y)|}{|y|^{n-1}}\,dy$$
and hence
\begin{multline*}t^{\al-1} \fint_{\partial B(x,t)}|f(y)|d\mathcal H^{n-1}(y)\leq c_nt^{\al-1}\int_{|y|>t} \frac{|\nabla f(x-y)|}{|y|^{n-1}}\,dy\\
\leq c_n\int_{|y|>t} \frac{|\nabla f(x-y)||y|^{\al-1}}{|y|^{n-1}}\,dy\leq c_nI_\al(|\nabla f|)(x).\end{multline*}
\end{proof}
As a consequence, we have the following Sobolev bounds for $\Sp_{\al-1}$, which seem to be new. Corollary \ref{sobolevsphere} will follow from the mapping properties of $I_\al$.

\begin{corollary} \label{sobolevsphere} Suppose $1\leq \al<n$ and $1<p<\frac{n}{\al}$ and $\frac1q=\frac1p-\frac{\al}{n}$.  Then $\Sp_{\al-1}:\dot W^{1,p}(\R^n)\ra L^q(\R^n)$ with
$$\|\Sp_{\al-1}f\|_{L^q(\R^n)}\leq C\|\nabla f\|_{L^p(\R^n)}.$$
At the endpoint $p=1$ we have the following weak-type estimate
$$\|\Sp_{\al-1}f\|_{L^{\frac{n}{n-\al},\infty}(\R^n)}\leq C\|\nabla f\|_{L^1(\R^n)}.$$
\end{corollary}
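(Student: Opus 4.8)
The plan is to deduce the corollary directly from the pointwise inequality $\Sp_{\al-1}f(x)\le c_nI_\al(|\nabla f|)(x)$, $f\in C_c^\infty(\R^n)$, established in the preceding theorem, combined with the classical mapping properties \eqref{Ialbds} and \eqref{weakIalbds} of the Riesz potential, and finished off by a density argument to reach all of $\dot W^{1,p}(\R^n)$.

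First I would handle smooth functions. Fix $f\in C_c^\infty(\R^n)$. For $1<p<\frac n\al$ with $\frac1p-\frac1q=\frac\al n$, applying the pointwise bound and then \eqref{Ialbds} to the nonnegative function $|\nabla f|\in L^p(\R^n)$ gives
$$\|\Sp_{\al-1}f\|_{L^q(\R^n)}\le c_n\|I_\al(|\nabla f|)\|_{L^q(\R^n)}\le C\|\nabla f\|_{L^p(\R^n)}.$$
At the endpoint $p=1$, where $q=\frac n{n-\al}$, the same pointwise bound together with the weak-type estimate \eqref{weakIalbds} yields
$$\|\Sp_{\al-1}f\|_{L^{\frac n{n-\al},\infty}(\R^n)}\le c_n\|I_\al(|\nabla f|)\|_{L^{\frac n{n-\al},\infty}(\R^n)}\le C\|\nabla f\|_{L^1(\R^n)}.$$
Note that the hypotheses $1\le p<\frac n\al$ and $\al\ge1$ force $p<n$, so we are always inside the range where the Riesz potential estimates are available.

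It then remains to pass from $C_c^\infty(\R^n)$ to $\dot W^{1,p}(\R^n)$. Given $f\in\dot W^{1,p}(\R^n)$, pick $f_j\in C_c^\infty(\R^n)$ with $\nabla f_j\to\nabla f$ in $L^p$; by the Gagliardo--Nirenberg--Sobolev inequality \eqref{GNSineq} (legitimate since $1\le p<n$) the functions $f_j$ also converge in $L^{p^*}$, so $f$ is identified with its $L^{p^*}$ representative and, along a subsequence, $f_j\to f$ a.e. Since $\Sp_{\al-1}$ is sublinear, $|\Sp_{\al-1}f_j-\Sp_{\al-1}f_k|\le\Sp_{\al-1}(f_j-f_k)$, so the inequalities of the previous paragraph show $\{\Sp_{\al-1}f_j\}$ is Cauchy in $L^q(\R^n)$ when $p>1$ and in $L^{\frac n{n-\al},\infty}(\R^n)$ when $p=1$; its limit is then identified with $\Sp_{\al-1}f$ via Fatou's lemma (or, what amounts to the same, one simply \emph{defines} $\Sp_{\al-1}$ on $\dot W^{1,p}(\R^n)$ as the continuous sublinear extension of the map on $C_c^\infty(\R^n)$). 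Letting $j\to\infty$ in the two norm inequalities completes the proof.

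The computations in the second paragraph are immediate, so the only step demanding genuine care is this density/extension argument: controlling the spherical averages $\fint_{\partial B(x,t)}|f_j|\,d\mathcal H^{n-1}$ under $L^{p^*}$-convergence, given that spheres are Lebesgue-null. This is circumvented exactly as for the other operators treated in the paper --- one estimates differences through sublinearity, so that only convergence of $\Sp_{\al-1}f_j$ in the target norm is required, and only then is the limit identified; alternatively one may run the approximation at the level of the integral representation $\fint_{\partial B(x,t)}|f|\,d\mathcal H^{n-1}\le c_n\int_{|y|>t}|y|^{1-n}|\nabla f(x-y)|\,dy$, whose right-hand side converges uniformly in $x$ because $|y|^{1-n}\mathbf 1_{\{|y|>t\}}\in L^{p'}(\R^n)$ precisely when $p<n$.
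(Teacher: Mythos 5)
Your proof is correct and follows exactly the paper's (one-line) approach: combine the pointwise bound $\Sp_{\al-1}f\leq c_nI_\al(|\nabla f|)$ with the strong and weak-type mapping properties \eqref{Ialbds}--\eqref{weakIalbds} of the Riesz potential. The only difference is that you spell out the density/extension step from $C_c^\infty(\R^n)$ to $\dot W^{1,p}(\R^n)$, which the paper leaves implicit; your treatment of it (sublinearity for Cauchy-ness in the target quasi-norm, plus the integral-kernel argument $|y|^{1-n}\mathbf 1_{\{|y|>t\}}\in L^{p'}$ to identify the limit despite spheres being Lebesgue-null) is sound.
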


\section{Weighted estimates} \label{weights}
In this section, we prove new weighted estimates for our sparse operators
$$I_{\al,L^s}^\mathscr S f=\sum_{Q\in \mathscr S}\ell(Q)^\al\left(\fint_Q f^s\right)^{\frac1s}\mathbf 1_Q, \qquad f\geq 0, $$
where $0<\al<n$ and $1\leq s<\frac{n}{\al}.$
Since these operators dominate our rough integral operators $T_{\Omega,\al}$ (Theorems \ref{poinwisecriticalT}, \ref{rlessnptwisebd}, and \ref{LlogLendpt}) we will also obtain new weighted Sobolev estimates of the form
\begin{equation}\label{oneweightSob}\|wT_{\Omega,\al}f\|_{L^q(\R^n)}\leq C\|w\nabla f\|_{L^p(\R^n)}.\end{equation}
We will refer to inequality \eqref{oneweightSob} as a weighted Sobolev inequality. The parameters and class of weights will depend on the size of $\Omega$.

As mentioned in the introduction, the operator $I_{\al,L^s}^\mathscr S$ is intimately related to the operator $f\mapsto \big[I_{\al s}(|f|^s)\big]^{\frac1s}$ (see Theorem \ref{Ainftybdds}).  We can work with the smaller maximal function $M_{\al,L^s}$ defined in \eqref{Lsfracmax}. Notice that 
$$M_{\al,L^s}f=\big[M_{\al s}(|f|^s)\big]^{\frac1s}\leq c\big[I_{\al s}(|f|^s)\big]^{\frac1s}.$$
By inequality \eqref{integralsparsebd} there exists finitely many sparse families $\mathscr S_1,\ldots,\mathscr S_N$ such that
$$M_{\al,L^s}f\leq c\sum_{k=1}^N I_{\al,L^s}^{\mathscr S_k}|f|.$$
The following theorem shows that we may obtain the opposite inequalities in norm.  It also yields Theorem \ref{Ainftybdds} as a corollary.

\begin{theorem} \label{AinftyMalLs} Suppose $1\leq s<\frac{n}{\al}$, $w\in A_\infty$, and $0<p<\infty$.  Then given a sparse family $\mathscr S$ of cubes there exists $C>0$, such that
$$\|I_{\al,L^s}^\mathscr S f\|_{L^p(w)}\leq C\|M_{\al,L^s} f\|_{L^p(w)}$$
and
$$\|I_{\al,L^s}^\mathscr S f\|_{L^{p,\infty}(w)}\leq C\|M_{\al,L^s} f\|_{L^{p,\infty}(w)}.$$  
\end{theorem}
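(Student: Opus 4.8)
The plan is to reduce the inequality to the single exponent $p=1$, prove it there by a short computation that uses only the sparseness of $\mathscr S$ and the $A_\infty$ property of $w$, and then recover every $L^p(w)$ and $L^{p,\infty}(w)$ bound by extrapolation from $A_\infty$ weights. First I would make the usual reductions: exhausting $\mathscr S$ by finite subfamilies (monotone convergence) and truncating, it suffices to treat a finite sparse family and $f\ge 0$ bounded with compact support, so that $I_{\al,L^s}^{\mathscr S}f$ is itself bounded with compact support and a priori lies in every $L^p(w)$. We may also assume $\mathscr S$ is contained in a single dyadic grid, which is the only case needed in the applications. The single pointwise fact that drives the argument is that $\ell(Q)^{\al}\big(\fint_Q f^{s}\big)^{1/s}\le M_{\al,L^s}f(x)$ for every $Q\in\mathscr S$ and every $x\in Q$, hence $\ell(Q)^{\al}\big(\fint_Q f^{s}\big)^{1/s}\le\inf_{E_Q}M_{\al,L^s}f$.

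For $p=1$, Tonelli and the sparse structure give
$$\|I_{\al,L^s}^{\mathscr S}f\|_{L^1(w)}=\sum_{Q\in\mathscr S}\ell(Q)^{\al}\Big(\fint_Q f^{s}\Big)^{\frac1s}w(Q).$$
Since $|E_Q|\ge\tfrac12|Q|$, the $A_\infty$ condition ensures $w(Q)\le C\,w(E_Q)$ with $C$ depending only on $[w]_{A_\infty}$; inserting this, then the pointwise bound, and then using that the sets $E_Q$ are pairwise disjoint,
$$\|I_{\al,L^s}^{\mathscr S}f\|_{L^1(w)}\le C\sum_{Q\in\mathscr S}\Big(\inf_{E_Q}M_{\al,L^s}f\Big)w(E_Q)\le C\int_{\R^n}M_{\al,L^s}f\,w\,dx=C\|M_{\al,L^s}f\|_{L^1(w)}.$$
This holds for every $w\in A_\infty$ with a uniform constant, so the extrapolation theorem for $A_\infty$ weights, applied to the pair of nonnegative functions $\big(I_{\al,L^s}^{\mathscr S}f,\,M_{\al,L^s}f\big)$, transfers it to $\|I_{\al,L^s}^{\mathscr S}f\|_{L^p(w)}\le C\|M_{\al,L^s}f\|_{L^p(w)}$ and to $\|I_{\al,L^s}^{\mathscr S}f\|_{L^{p,\infty}(w)}\le C\|M_{\al,L^s}f\|_{L^{p,\infty}(w)}$ for all $0<p<\infty$ and all $w\in A_\infty$; letting the finite subfamilies increase to $\mathscr S$ closes the argument. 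Equivalently, one may run the base case at an exponent $1<p_0<\infty$ by duality, writing $\int_Q gw=\langle g\rangle_Q^{w}\,w(Q)$, bounding $\langle g\rangle_Q^{w}$ by the weighted dyadic maximal function $M_w^{\mathscr D}g$ on $E_Q$, absorbing $w(Q)\le C\,w(E_Q)$ as before, and using that $M_w^{\mathscr D}$ is bounded on $L^{p_0'}(w\,dx)$ with constant independent of $w$.

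If one prefers to avoid extrapolation, the same conclusion comes from a good-$\lambda$ inequality: decompose the open set $\{I_{\al,L^s}^{\mathscr S}f>\lambda\}$ into its maximal dyadic subcubes $Q_j$; on each $Q_j$ bound the contribution of the cubes of $\mathscr S$ containing $Q_j$ by $\lambda$ (evaluating at a point of the parent of $Q_j$ outside the level set, and noting the term of $Q_j$ itself is $\le\gamma\lambda$ wherever $M_{\al,L^s}f\le\gamma\lambda$), isolate the localized operator $\sum_{Q\in\mathscr S,\,Q\subsetneq Q_j}\ell(Q)^{\al}\big(\fint_Q f^{s}\big)^{1/s}\mathbf 1_Q$, estimate its distribution function, and pass from a small Lebesgue proportion to a small $w$-proportion via the reverse H\"older inequality of $A_\infty$; integrating against $d(\lambda^{p})$ gives both the strong and the weak bounds. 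In either route the main obstacle is the range below $p=1$ together with the weak endpoint, where duality is unavailable: one has to control the localized sparse operator on $Q_j$ purely in terms of $\gamma\lambda$, which couples the sparseness, the unweighted off-diagonal bounds of the $L^s$-fractional maximal function, and the self-improving (reverse H\"older) property of $A_\infty$. Finally, since $M_{\al,L^s}f=\big[M_{\al s}(f^{s})\big]^{1/s}\le c\big[I_{\al s}(f^{s})\big]^{1/s}$ pointwise (here $\al s<n$), the estimates above contain Theorem \ref{Ainftybdds} as an immediate corollary.
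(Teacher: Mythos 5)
Your argument is correct and is essentially the paper's proof: one establishes the $L^1(w)$ inequality directly from the sparse structure, the disjointness of the $E_Q$, and the $A_\infty$ property $w(Q)\le Cw(E_Q)$, and then invokes the $A_\infty$ version of Rubio de Francia extrapolation (Cruz-Uribe--Martell--P\'erez) to pass to all $0<p<\infty$ and to the weak-type inequality. The alternative good-$\lambda$ route you sketch is not needed, and the extra truncation/exhaustion remarks, while harmless, are not part of the paper's shorter argument.
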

\begin{proof} To prove this theorem, we will first prove it in the case $p=1$ for any $w\in A_\infty$. Once this is shown the conclusion will then follow from the $A_\infty$ version of the Rubio de Francia extrapolation (see \cite[Theorem 1.1]{CMP}).  Fix a sparse subset $\mathscr S$ and $w\in A_\infty$. 
Let $E_Q\subseteq Q\in \mathscr S$ be pairwise disjoint majorizing sets.  Since $w\in A_\infty$ and $|Q|\leq 2|E_Q|$ we have 
$$w(Q)\leq Cw(E_Q).$$
We can now estimate the $L^1(w)$ norm 
\begin{align*}
\int_{\R^n}|I_{\al,L^s}^\mathscr S f| w &\leq \sum_{Q\in \mathscr S} \ell(Q)^\al \left(\fint_Q |f|^s\right)^{\frac1s} w(Q)\\
&\leq C\sum_{Q\in \mathscr S} \ell(Q)^\al \left(\fint_Q |f|^s\right)^{\frac1s} w(E_Q)\\
&\leq C\sum_{Q\in \mathscr S} \int_{E_Q}\big(M_{\al,L^s}f\big)w\\
&\leq C\int_{\R^n} \big(M_{\al,L^s}f\big)w.
\end{align*}
This concludes the proof of Theorem \ref{AinftyMalLs}.
\end{proof}
Recall that the weighted estimates for $I_\al$ and $M_\al$ are known and were discovered by Muckenhoupt and Wheeden \cite{MW}.  They showed the following equivalences for $1<p<\frac{n}{\al}$ and $\frac1q=\frac1p-\frac\al{n}$:
\begin{itemize}
\item $M_\al$ is bounded:
$$\|wM_\al f\|_{L^q(\R^n)}\leq C\|wf\|_{L^p(\R^n)};$$
\item $I_\al$ is bounded:
$$\|wI_\al f\|_{L^q(\R^n)}\leq C\|wf\|_{L^p(\R^n)};$$
\item $w\in A_{p,q}$
$$\sup_Q \left(\fint_Q w^q\right)^\frac1q\left(\fint_Q w^{-p'}\right)^{\frac1{p'}}<\infty.$$
\end{itemize}
The weak-type boundedness at the endpoint $p=1$, $q=\frac{n}{n-\al}$
$$M_\al,I_\al :L^1(w)\ra L^{\frac{n}{n-\al},\infty}(w)$$
is equivalent to the $w\in A_{1,\frac{n}{n-\al}}$ condition:
$$ \left(\fint_Q w^\frac{n}{n-\al}\right)\leq C\inf_{x\in Q} w(x)^{\frac{n}{n-\al}}.$$
The $A_{p,q}$ classes of weights are related to the usual Muckenhoupt class of weights via the following realization:
\begin{equation}\label{Apq} w\in A_{p,q} \Leftrightarrow w^q\in A_{1+\frac{q}{p'}},\end{equation}
in particular $A_{p,q}$ weights raised to the power $q$ belong to $A_\infty$.  Using this rescaling we can easily obtain the weighted norm inequalities for $I_{\al,L^s}^\mathscr S$ and $M_{\al,L^s}$ by realizing that it is just a rescaling of the $L^p$ norm:
$$\|wM_{\al,L^s}f\|_{L^q(\R^n)}=\left(\int_{\R^n}\big[M_{\al s}(|f|^s)\big]^{\frac{q}{s}}w^q\right)^{\frac1q}=\|w^{s}M_{\al s}(|f|^s)\|_{L^{\frac{q}{s}}(\R^n)}^{\frac1s}.$$
Since 
$$\frac1q=\frac1p-\frac{\al}{n} \Leftrightarrow \frac{s}{q}=\frac{s}{p}-\frac{\al s}{n}$$
we obtain the following theorem.

\begin{theorem} \label{oneweightthm} Suppose $1\leq s<p<\frac{n}{\al}$ and $q$ is defined by $\frac1p-\frac1q=\frac{\al}{n}$.  The following are equivalent:
\begin{enumerate}
\item $w^s \in A_{\frac{p}{s},\frac{q}{s}}$:
$$\sup_Q\left(\,\fint_Q w^q\right)^\frac1q\left(\,\fint_Q w^{-\frac{ps}{p-s}}\right)^{\frac{p-s}{ps}}<\infty;$$
\item the operator $M_{\al,L^s}$ satisfies
$$\|wM_{\al,L^s} f\|_{L^q(\R^n)}\leq C\|wf\|_{L^p(\R^n)};$$
\item for any sparse family of cubes $\mathscr S$, we have
$$\|wI_{\al,L^s}^\mathscr S f\|_{L^q(\R^n)}\leq C\|wf\|_{L^p(\R^n)};$$
\item the operator $f\mapsto \big[I_{\al s}(|f|^s)\big]^{\frac1s}$ satisfies
$$\big\|w\big[I_{\al s}(|f|^s)\big]^{\frac1s}\big\|_{L^q(\R^n)}\leq C\|wf\|_{L^p(\R^n)}.$$
\end{enumerate}
When $p=s$ and $q=\frac{ns}{n-\al s}$ any of the operators $I_{\al,L^s}^\mathscr S$, $M_{\al,L^s}$, or $[I_{\al s}(|\cdot|^s)]^{\frac1s}$ are bounded from $L^s(w^s)\ra L^{q,\infty}(w^q)$ when $w^s\in A_{1,\frac{q}{s}}$ i.e.,
$$\left(\fint_Qw^{\frac{ns}{n-\al s}}\right)\leq C\inf_{x\in Q} w(x)^{\frac{ns}{n-\al s}}$$
for all cubes $Q\subseteq \R^n$.
\end{theorem}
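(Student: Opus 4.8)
The plan is to derive all four equivalences, together with the weak-type endpoint, from the classical Muckenhoupt--Wheeden theorem for $M_\beta$ and $I_\beta$ (recalled just above the statement) by means of the rescaling $g=|f|^s$, $v=w^s$, $\beta=\al s$. First I would record the pointwise identity $M_{\al,L^s}f=\big[M_{\al s}(|f|^s)\big]^{1/s}$ (and the analogous one with $I_{\al s}$) and the resulting norm rescalings
$$\|w\,M_{\al,L^s}f\|_{L^q(\R^n)}=\big\|w^{s}M_{\al s}(|f|^s)\big\|_{L^{q/s}(\R^n)}^{1/s},\qquad \|wf\|_{L^p(\R^n)}=\big\|w^{s}|f|^s\big\|_{L^{p/s}(\R^n)}^{1/s}.$$
Since $1\le s<p<\tfrac{n}{\al}$ forces $1<\tfrac{p}{s}<\tfrac{n}{\al s}$, and since $\tfrac{1}{p/s}-\tfrac{1}{q/s}=s\big(\tfrac1p-\tfrac1q\big)=\tfrac{\al s}{n}$, the exponents on the right are precisely those governing the operators $M_{\al s}$ and $I_{\al s}$; and because $f\mapsto|f|^s$ runs over all nonnegative functions of the ambient class, statements (2) and (4) are exactly the assertions that $M_{\al s}$, resp.\ $I_{\al s}$, maps $L^{p/s}(v)\to L^{q/s}(v)$ with $v=w^s$.

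By Muckenhoupt--Wheeden, each of these boundedness properties holds if and only if $v=w^s\in A_{p/s,\,q/s}$. Writing the $A_{p/s,q/s}$ condition out with $v=w^s$ and $(p/s)'=\tfrac{p}{p-s}$, so that $s(p/s)'=\tfrac{ps}{p-s}$ and $\tfrac{1}{(p/s)'}=\tfrac{p-s}{p}$, and then raising it to the harmless power $1/s$, produces exactly the supremum condition displayed in (1); hence $(1)\Leftrightarrow(2)\Leftrightarrow(4)$. To bring (3) into the picture I would first invoke the realization \eqref{Apq}: $w^s\in A_{p/s,q/s}$ implies $(w^s)^{q/s}=w^q\in A_{1+\frac{q/s}{(p/s)'}}\subset A_\infty$. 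Then Theorem \ref{AinftyMalLs}, applied with the weight $w^q$ and exponent $q$, gives for every sparse family $\mathscr S$
$$\|w\,I_{\al,L^s}^{\mathscr S}f\|_{L^q(\R^n)}=\|I_{\al,L^s}^{\mathscr S}f\|_{L^q(w^q)}\le C\,\|M_{\al,L^s}f\|_{L^q(w^q)}=C\,\|w\,M_{\al,L^s}f\|_{L^q(\R^n)},$$
so $(2)\Rightarrow(3)$; conversely, the pointwise bound $M_{\al,L^s}f\le c\sum_{k=1}^N I_{\al,L^s}^{\mathscr S_k}|f|$ coming from \eqref{integralsparsebd} gives $(3)\Rightarrow(2)$ after summing in $k$ and using the triangle inequality in $L^q$ (note $q>p>1$). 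The same two ingredients, combined with Theorem \ref{Ainftybdds}, give $(3)\Leftrightarrow(4)$ directly as well.

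For the weak-type endpoint $p=s$ the argument is identical with $p/s=1$: the rescaling now reads $\|M_{\al,L^s}f\|_{L^{q,\infty}(w^q)}=\big\|M_{\al s}(|f|^s)\big\|_{L^{q/s,\infty}((w^s)^{q/s})}^{1/s}$ with $q/s=\tfrac{n}{n-\al s}$ and $w^q=(w^s)^{q/s}$, the Muckenhoupt--Wheeden weak endpoint gives $M_{\al s},I_{\al s}\colon L^1(v)\to L^{q/s,\infty}(v^{q/s})$ if and only if $v=w^s\in A_{1,\,q/s}$ --- which unwinds (using $q=\tfrac{ns}{n-\al s}$) to the stated condition $\big(\fint_Q w^{\frac{ns}{n-\al s}}\big)\le C\inf_{Q}w^{\frac{ns}{n-\al s}}$ --- and the weak-type half of Theorem \ref{AinftyMalLs} (legitimate because $w^s\in A_{1,q/s}$ forces $w^q\in A_1\subset A_\infty$) transfers the bound to $I_{\al,L^s}^{\mathscr S}$ and to $[I_{\al s}(|\cdot|^s)]^{1/s}$. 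The only step demanding genuine care is the exponent bookkeeping in this $A_{p,q}$ rescaling --- verifying $\tfrac1{p/s}-\tfrac1{q/s}=\tfrac{\al s}{n}$, that $(p/s)'$ produces the exponent $\tfrac{ps}{p-s}$, and that $f\mapsto|f|^s$ is a genuine bijection of the relevant function spaces --- after which the proof is a transcription of results already in hand.
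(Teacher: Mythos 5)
Your proof is correct and follows essentially the same approach as the paper: the rescaling identity $\|wM_{\al,L^s}f\|_{L^q}=\|w^s M_{\al s}(|f|^s)\|_{L^{q/s}}^{1/s}$ together with the exponent check $\tfrac{s}{q}=\tfrac{s}{p}-\tfrac{\al s}{n}$ reduces (1), (2), (4) to the Muckenhoupt--Wheeden theorem for $M_{\al s}$ and $I_{\al s}$, and (3) is brought in via the realization \eqref{Apq} (which places $w^q$ in $A_\infty$) combined with Theorem~\ref{AinftyMalLs} in one direction and the sparse domination $M_{\al,L^s}f\le c\sum_k I_{\al,L^s}^{\mathscr S_k}|f|$ from \eqref{integralsparsebd} in the other. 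The weak-endpoint bookkeeping (including the observation that $w^s\in A_{1,q/s}$ forces $w^q\in A_1\subset A_\infty$, so the weak half of Theorem~\ref{AinftyMalLs} applies) is also exactly the intended argument; you have merely written out the details that the paper leaves implicit.
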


When $\Omega \in L^{r,r^*}(\Sn^{n-1})$ or $\Omega \in L(\log L)^{\frac1{n'}}(\Sn^{n-1})$, the pointwise bounds from Theorems \ref{rlessnptwisebd} and \ref{LlogLendpt} combined with Theorem \ref{oneweightthm} will yield new weighted Sobolev estimates for $T_{\Omega,\al}$. The class of weights will depend on the exponent $s=\frac{r'n}{r'+n}$.  While this exponent can be used to compute the explicit class of weights, we find it more instructive to consider the power weights.  It is well-known that the power weight $w(x)=|x|^\lambda$ belongs to $A_p$ if and only if $-n<\lambda<n(p-1)$.  Using the equivalence \eqref{Apq} we see that $w(x)=|x|^\lambda$ belongs to $A_{p,q}$ if and only if 
$$-\frac{n}{q}<\lambda<\frac{n}{p'}.$$
Such power weights satisfy $w^s\in A_{\frac{p}{s},\frac{q}{s}}$ if and only if
$$-\frac{n}{q}<\lambda<\frac{n}{s}-\frac{n}{p}.$$
When $s=\frac{r'n}{r'+n}$, by our pointwise bounds from Theorem \ref{rlessnptwisebd} we obtain Theorems \ref{roughpower1} and \ref{roughpower2}.




\begin{thebibliography}{99}


\bibitem{BN} H. Brezis, L. Nirenberg, \emph{Degree theory and BMO. I. Compact manifolds without boundaries}, Selecta Math.
(N.S.) {\bfseries 1} (1995) 197--263.

\bibitem{CFY}  J. Chen, D. Fan, and Y. Ying, \emph{Certain operators with rough singular kernels}, Canad. J. Math. 55 (2003), 504--532.

\bibitem{Cr} M. Christ, \emph{Weak type (1, 1) bounds for rough operators}, Ann. of Math. (2) 128 (1988), no. 1, 19--42.


\bibitem{CM2} D. Cruz-Uribe and K. Moen, \emph{One and two weight norm inequalities for Riesz potentials}, Illinois J. Math. 57 (2013), no. 1, 295--323.

\bibitem{CMP} D. Cruz-Uribe, J. M. Martell and C. P\'erez, \emph{Extrapolation from $A_\infty$ weights and applications}, J. Funct. Anal. {\bf 213} (2004), 412--439.

\bibitem{Duo} J. Duoandikoetxea, \emph{Weighted norm inequalities for homogeneous singular integrals}, Trans. Amer. Math.
Soc. 336 (1993), no. 2, 869--880.

\bibitem{Graf} L. Grafakos, Modern Fourier analysis, 2nd ed., Graduate Texts in Mathematics, vol. 250,
Springer, New York, 2008.

\bibitem{HL} P. Haj\l{}asz and Z. Liu, \emph{Maximal potentials, maximal singular integrals, and the spherical maximal function}, Proc. Amer. Math. Soc. 142 (2014), no. 11, 3965--3974.


\bibitem{HL2} P. Haj\l{}asz and Z. Liu, \emph{Sobolev spaces, Lebesgue points and maximal functions}, J. Fixed Point Theory Appl. 13 (2013), no. 1, 259--269.

\bibitem{HMP} C. Hoang, K. Moen, and C. P\'erez, \emph{Pointwise estimates for rough operators with applications to weighted Sobolev inequalities}, J. Anal. Math. (2024). https://doi.org/10.1007/s11854-024-0345-0.

\bibitem{HMP2} C. Hoang, K. Moen, and C. P\'erez, \emph{A New Look at the Subrepresentation Formulas}, La Matematica, https://doi.org/10.1007/s44007-025-00151-3.

\bibitem{Hon1} P. Honz\'ik, \emph{An example of an unbounded maximal singular operator}, J. Geom. Anal. 20 (2010), no. 1, 153--167.


\bibitem{KS} J. Kinnunen and E. Saksman, \emph{Regularity of the fractional maximal function}, Bull. London Math. Soc. {\bfseries 35} (2003) 529--535.

\bibitem{LMPT} M.T. Lacey, K. Moen, C. Pérez, and R.H. Torres, \emph{Sharp weighted bounds for fractional integral operators}, J. Funct. Anal. 259 (2010), no. 5, 1073--1097.


\bibitem{MacP} P. MacManus and C. P\'erez, \emph{Trudinger inequalities without derivatives}, Trans. Amer. Math. Soc. {\bfseries 354} (2002), 1997--2002.

\bibitem{MP} {J. Mal\'y} and {L. Pick}, \emph{An elementary proof of sharp Sobolev embeddings}, Proc. Amer. Math. Soc. {\bf 130} (2002), no. 2, 555--563.


\bibitem{On} R. O'Neil, \emph{Convolution operators and L(p,q) spaces}, Duke Math. J. {\bf 30} 1963, 129--142.

\bibitem{Pe} J. Peetre, \emph{Espaces d’interpolation et th\`eor\'eme de Soboleff}, Ann. Inst. Fourier {\bf 16} 1966, 279--
317.

\bibitem{MW} B. Muckenhoupt and R. Wheeden, \emph{Weighted norm inequalities for fractional integrals}, Trans. Amer. Math. Soc. 192 (1974) 261–274.

\bibitem{Sp} D. Spector, \emph{An optimal Sobolev embedding for $L^1$}, J. Funct. Anal. 279 (2020), 108559, 26 pp.

\bibitem{Wh1} R. Wheeden, \emph{On hypersingular integrals and Lebesgue spaces of differentiable functions}, Trans. Amer. Math. Soc. 134 (1968), 421--436.

\bibitem{Wh2} R. Wheeden, \emph{On hypersingular integrals and Lebesgue spaces of differentiable functions. II}, Trans. Amer. Math. Soc. 139 (1969), 37--53.

\end{thebibliography}
\end{document}